\newcommand{\HH}{\mathcal{H}}
\newcommand{\HM}{\mathcal{M}}
\newcommand{\HN}{\mathcal{N}}
\newcommand{\HA}{\mathcal{A}}
\newcommand{\D}{\mathbb{D}}
\newcommand{\C}{\mathbb{C}}
\newcommand{\N}{\mathbb{N}}
\newcommand{\T}{\mathbb{T}}
 \newtheorem{thm}{Theorem}[section]
 \newtheorem{lem}[thm]{Lemma}
 \theoremstyle{definition}
 \theoremstyle{remark}
 \newtheorem{rem}[thm]{Remark}
 \numberwithin{equation}{section}
\begin{document}

%
%
%
%
%
%
%
%
%

\title[Reducing subspaces on $D$]
 {Reducing subspaces of multiplication \\operators on the Dirichlet space}

\author[Shuaibing Luo]{Shuaibing Luo}

\address{%
College of Mathematics and Econometrics\\
Hunan University\\
Changsha, Hunan, 410082\\
PR China}

\email{shuailuo2@126.com}

\thanks{This work is supported by the Young Teacher Program of Hunan University.}
\subjclass{47B35, 46E22}

\keywords{Dirichlet space, Reducing subspaces, Finite Blaschke products.}

\date{\today}

\begin{abstract}
In this paper, we study the reducing subspaces for the multiplication operator by a finite Blaschke product $\phi$ on the Dirichlet space $D$. We prove that any two distinct nontrivial minimal reducing subspaces of $M_\phi$ are orthogonal. When the order $n$ of $\phi$ is $2$ or $3$, we show that $M_\phi$ is reducible on $D$ if and only if $\phi$ is equivalent to $z^n$. When the order of $\phi$ is $4$, we determine the reducing subspaces for $M_\phi$, and we see that in this case $M_\phi$ can be reducible on $D$ when $\phi$ is not equivalent to $z^4$. The same phenomenon happens when the order $n$ of $\phi$ is not a prime number. Furthermore, we show that $M_\phi$ is unitarily equivalent to $M_{z^n} (n > 1)$ on $D$ if and only if $\phi = az^n$ for some unimodular constant $a$.
\end{abstract}

\maketitle
\section{Introduction}
Let $\D = \{z \in \C: |z|<1\}$ be the open unit disc in $\C$ and let $dA$ be the normalized Lebesgue area measure on $\D$. The Dirichlet space $D$ consists of all holomorphic functions $f$ on $\D$ such that
\begin{align*}
D(f) = \int_{\D} |f'|^2 dA = \sum\limits_{n = 1 }^\infty n |\hat{f}(n)|^2 < \infty.
\end{align*}
Endow $D$ with the norm
\begin{align}\label{snorod}
\|f\|_D^2 = \|f\|_{H^2}^2 + \int_{\D} |f'|^2 dA = \sum_{n = 0 }^\infty (n+1) |\hat{f}(n)|^2,
\end{align}
where $H^2$ is the Hardy space. It follows that the reproducing kernel of $D$ has the following form
$$K_\lambda(z) = \frac{1}{\overline{\lambda}z}\log\frac{1}{1-\overline{\lambda}z}.$$

The Dirichlet space has received a lot of attention over the years, see e.g. \cite{ARSW10, ARSW11, ARSW112, EKR09, LR, Ro06}. Articles \cite{ARSW112} and \cite{Ro06} are good surveys about the Dirichlet space which have a number of results and some open questions. Articles \cite{EKR09} and \cite{LR} also contain some open questions for the Dirichlet space.

In a Hilbert space $\HH$, a closed subspace $\HM$ is called a reducing subspace of an operator $T$ if $\HM$ is invariant for both $T$ and $T^*$. An operator $T$ is called reducible if $T$ has a nontrivial reducing subspace. And a nontrivial reducing subspace $\HM$ is called minimal for $T$ if the only reducing subspaces contained in $\HM$ are $\HM$ and $0$.

A function $\phi$ is called a multiplier of $D$ if $\phi D \subseteq D$. Let $M(D)$ denote the multipliers of $D$. By the closed graph theorem, every function $\varphi \in M(D)$ defines a bounded linear operator on $D$. Note that each multiplier in $M(D)$ is a bounded holomorphic function on $\D$, hence $M_\phi$ is also bounded on the Hardy space $H^2$ and the Bergman space $L^2_a$, where the Bergman space $L^2_a$ consists of all square integrable analytic functions on $\D$ and it has the reproducing kernel
$$Q_\lambda(z) = \frac{1}{(1-\overline{\lambda}z)^2}.$$

Given $\lambda \in \D$, let $\varphi_\lambda(z) = \frac{\lambda - z}{1 - \overline{\lambda}z}$ be the M\"{o}bius transform. For finitely many points $\lambda_1, \cdots, \lambda_n \in \D$, let $\phi = \varphi_{\lambda_1}\cdots\varphi_{\lambda_n}$ be the finite Blaschke product with zeros $\lambda_1, \cdots, \lambda_n$.

It is known that for each inner function $\phi$ which is not a M\"{o}bius transform, the reducing subspaces of $M_\phi$ are in one-to-one correspondence with the closed subspaces of $H^2 \ominus \phi H^2$ (\cite{Ha61, No67}). On the Bergman space, for a finite Blaschke product $\phi$, there always exists a nontrivial minimal reducing subspace $M_0(\phi)$ of $M_\phi$ on $L^2_a$ (\cite{GSZZ09, SZ03}). Moreover, Douglas et al.\cite{DPW12} showed that for a finite Blaschke product $\phi$, the number of minimal reducing subspaces of $M_\phi$ on $L^2_a$ equals the number of connected components of the Riemann surface $\phi^{-1}\circ \phi$. We refer the readers to the recent monograph \cite{GH15} for more information about the multiplication operator on the Bergman space.

But on the Dirichlet space $D$, the case is quite different. In \cite{SZ02} Stessin and Zhu showed that the multiplication operator $M_{z^n} (n > 1)$ has exactly $2^n - 2$ proper reducing subspaces on $D$. When $\phi$ is a finite Blaschke product other than $z^n$, little is known about the reducing subspaces of $M_\phi$ on $D$. When $\phi = \varphi_{\lambda_1}\varphi_{\lambda_2}$ is a Blaschke product of order $2$, Zhao \cite{Zh09} proved that on the Dirichlet space with the norm $\|\cdot\|_1$, $M_\phi$ is reducible if and only if $\lambda_1 = - \lambda_2$; Chen and Lee \cite{CL14} obtained a similar result on the Dirichlet space with the norm $\|\cdot\|_D$ defined by (\ref{snorod}). When $\phi$ is a Blaschke product of order $n =2$ or $3$, Chen and Xu \cite{CX13} showed that on the Dirichlet space with the norm $\|\cdot\|_2$, $M_\phi$ is reducible if and only if $\phi$ is equivalent to $z^n$ (see the definition below). However, when order $\phi \geq 3$, it remains open when $M_\phi$ is reducible on the Dirichlet space with the norm $\|\cdot\|_1$ or $\|\cdot\|_D$, see the remark on Page 111 \cite{GH15}.

In this paper, we will use a different method to show that for $\phi = \varphi_{\lambda_1}\varphi_{\lambda_2}$, $M_\phi$ is reducible on the Dirichlet space with the norm defined by (\ref{snorod}) if and only if $\lambda_1 = - \lambda_2$. We also have a similar characterization for $\phi$ with three zeros. To state our results in a unified manner, we introduce some notation. We say that two Blaschke products $\phi_1$ and $\phi_2$ are equivalent if there exist $\lambda \in \D, |a| = 1$ such that
$$\phi_2 = a \varphi_\lambda \circ \phi_1.$$
It can be verified that for two equivalent Blaschke products $\phi_1$ and $\phi_2$, $M_{\phi_1}$ and $M_{\phi_2}$ has the same reducing subspaces, see e.g. \cite{Zh09}.

\begin{thm}\label{prodortt}
Let $\phi$ be a finite Blaschke product of order $n =2$ or $3$. Then $M_\phi$ is reducible on $D$ if and only if there exists $\lambda \in \D$ such that $\phi = a \varphi_\lambda(z^n), |a| = 1$, i.e. $\phi$ is equivalent to $z^n$.
\end{thm}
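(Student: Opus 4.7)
The ``if'' direction is immediate from the observation preceding the statement that equivalent Blaschke products yield multiplication operators with the same reducing subspaces, together with Stessin--Zhu's result that $M_{z^n}$ has $2^n - 2 \geq 2$ proper reducing subspaces on $D$.

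For the ``only if'' direction, the plan combines two ingredients: the orthogonality of distinct minimal reducing subspaces of $M_\phi$ (the main general result of the paper) and the finite-dimensional defect space. Because $\phi$ is a finite Blaschke product of order $n$, $M_\phi$ is bounded below on $D$ (since $|\phi| \geq c > 0$ on $\partial \D$), so $M_\phi$ is Fredholm with trivial kernel; therefore $\HE := D \ominus \phi D = \ker M_\phi^*$ has dimension exactly $n$. Using the identity $M_\phi^* K_\lambda = \overline{\phi(\lambda)} K_\lambda$ one checks that $\HE$ is spanned by $K_{\lambda_1}, \ldots, K_{\lambda_n}$, where $\lambda_1, \ldots, \lambda_n$ are the zeros of $\phi$ (which may be assumed simple after a small perturbation that is removed at the end by continuity).

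Now suppose $\HM$ is a nontrivial reducing subspace of $M_\phi$; one may assume $\HM$ is minimal. Since $P = P_\HM$ commutes with $M_\phi^*$, it preserves $\HE$. A short separate argument (using $\bigcap_k \phi^k D = \{0\}$, which follows because nonzero elements of $D$ have only finite-order zeros) shows that any nontrivial reducing subspace must meet $\HE$ nontrivially, so $P|_\HE \neq 0$. Combining with the orthogonality theorem, the minimal reducing subspaces correspond to an orthogonal decomposition of $\HE$ into at most $n$ summands. For $n = 2$ or $3$ the task is then to extract from the existence of such a decomposition an algebraic condition on $\lambda_1, \ldots, \lambda_n$ strong enough to force $\phi$ to be equivalent to $z^n$.

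The final algebraic step is the main obstacle. The pairings $\langle K_{\lambda_i}, K_{\lambda_j}\rangle_D = K_{\lambda_i}(\lambda_j)$ involve the logarithmic Dirichlet kernel, so direct coefficient-matching is unavailable. The approach I would take is to propagate the orthogonal decomposition of $\HE$ through the shift structure of $M_\phi$: for $a, b \geq 0$, the inner products $\langle \phi^a e_i, \phi^b e_j\rangle_D$, with $e_i$ a chosen unit vector in $\HM_i \cap \HE$, must vanish for $i \neq j$, producing a doubly-indexed family of identities in $\lambda_1, \ldots, \lambda_n$ from which leading-order information can be compared term by term. For $n = 2$ this forces $\lambda_1 = -\lambda_2$; for $n = 3$ the corresponding system forces the zero set $\{\lambda_1, \lambda_2, \lambda_3\}$ to be permuted cyclically by a disc automorphism, whence by the classification of finite-order M\"{o}bius maps of $\D$ one obtains that $\phi$ is equivalent to $z^3$. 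Recovering the general case (multiple zeros allowed) from the simple-zero case is then a routine continuity argument.
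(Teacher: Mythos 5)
Your ``if'' direction and your general setup (the defect space $\HE = \ker M_\phi^*$ of dimension $n$ spanned by kernel functions, the fact that a nontrivial reducing subspace must meet $\HE$, and the appeal to the orthogonality of distinct minimal reducing subspaces) are all fine, but the proof has a genuine gap precisely where you write that ``the final algebraic step is the main obstacle'': you never actually derive the conclusion from the vanishing of the pairings $\langle \phi^a e_i, \phi^b e_j\rangle_D$. You assert that for $n=2$ these identities force $\lambda_1 = -\lambda_2$ and that for $n=3$ they force the zero set to be cyclically permuted by a disc automorphism, but no computation or mechanism is supplied, and with the logarithmic reproducing kernel it is not at all evident that ``leading-order comparison'' closes the argument. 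Since this step is the entire content of the theorem, what you have is a plan rather than a proof. There is a second, independent gap: the reduction to simple zeros ``by a small perturbation removed at the end by continuity'' does not work, because reducibility of $M_\phi$ is not known to be stable under perturbation of the zeros --- if $\phi$ has a multiple zero and $M_\phi$ is reducible, there is no reason a nearby $\phi_\varepsilon$ with simple zeros inherits a reducing subspace. The degenerate case genuinely needs its own argument; in the paper it is Lemma~\ref{pfnznnrd}, which handles $\phi$ equivalent to $\varphi_c^3$ (the multiple critical point case) by a separate computation.

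For comparison, the paper's route is quite different and is what makes the problem tractable. Theorem~\ref{viorbdb} shows that the unitary $U:D\to L^2_a$, $Uf=(zf)'$, carries reducing subspaces of $M_\phi$ on $D$ to reducing subspaces of $M_\phi$ on the Bergman space, where the lattice of reducing subspaces is completely classified for orders $2$ and $3$. This pins down any candidate $\HM\subseteq D$ explicitly: after normalizing so that $\phi(0)=0$, one is reduced to $\HM=\text{span}\{\phi^{j+1}/z : j\ge 0\}$ (and, in the order-$3$ case with a multiple critical point, to two further explicit subspaces). A direct computation using the identity $z\phi'\overline{\phi}=\sum_i P_{\lambda_i}$ on $\T$ then gives $M_\phi^*(\phi/z)=\sum_i \overline{\lambda_i}K_{\lambda_i}$ (Theorem~\ref{dgridtvb}), which can lie in $\HM$ only if every $\lambda_i=0$. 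If you want to rescue your approach, that Poisson-kernel identity is the concrete tool that would have to replace your missing algebraic step.
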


To prove Theorem \ref{prodortt}, we need the following important observation. Let $U: D \rightarrow L^2_a$ be defined by $Uf = (zf)'$, then $U$ is a unitary operator from $D$ onto $L^2_a$.

\begin{thm}\label{rdsbdbs}
Let $\phi$ be a finite Blaschke product. If $\HM$ is a reducing subspace of $M_\phi$ on $D$, then $U\HM = (z\HM)'$ is a reducing subspace of $M_\phi$ on $L^2_a$.
\end{thm}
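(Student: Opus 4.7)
Setting $\HN := U\HM$, which is closed in $L^2_a$ by unitarity of $U$ (with $U\HM^\perp = \HN^\perp$), the goal is to show $M_\phi \HN \subseteq \HN$ and $M_\phi^* \HN \subseteq \HN$ in $L^2_a$. My plan is to derive a commutator identity between $U$ and $M_\phi$, use it to reduce the problem to an invariance condition for the multiplication operator $M_{z\phi'}$, and then verify that condition via reproducing kernels.

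A direct computation from $Uf = (zf)'$ gives, for any $f \in D$,
\[
U(\phi f) = (z\phi f)' = \phi \cdot (zf)' + z\phi' f = \phi \cdot Uf + z\phi' f,
\]
so as bounded operators $D \to L^2_a$ (boundedness follows from $z\phi' \in H^\infty$ for any finite Blaschke product $\phi$),
\[
U M_\phi - M_\phi U = M_{z\phi'}.
\]
If $f \in \HM$, then $\phi f \in \HM$ and hence $U(\phi f) \in \HN$; so $M_\phi(Uf) = U(\phi f) - z\phi' f$ lies in $\HN$ iff $z\phi' f \in \HN$. The same reasoning with $\HM^\perp$ in place of $\HM$ (which is $M_\phi$-invariant since $\HM$ is reducing) shows $M_\phi \HN^\perp \subseteq \HN^\perp$ iff $z\phi' \HM^\perp \subseteq \HN^\perp$. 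Once both inclusions hold, $\HN$ and $\HN^\perp$ are complementary $M_\phi$-invariant subspaces of $L^2_a$, which gives $M_\phi^*$-invariance of $\HN$ automatically. Thus the problem reduces to showing that the operator $S := U^{-1} M_{z\phi'} \colon D \to D$ reduces $\HM$.

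For this main step, I would use reproducing kernels. Since $\HM$ reduces $M_\phi$ on $D$ and $M_\phi^* K_\mu = \overline{\phi(\mu)} K_\mu$, the vector $P_\HM K_\mu$ lies in the $M_\phi^*$-eigenspace on $D$ at $\overline{\phi(\mu)}$; for generic $\mu \in \D$ this eigenspace is $\mathrm{span}\{K_{\mu_i} : \mu_i \in \phi^{-1}(\phi(\mu))\}$, so $P_\HM K_\mu = \sum_i c_i(\mu) K_{\mu_i}$. The explicit computations $U K_\mu = (1-\bar\mu z)^{-1}$ and $U\bigl((1-\bar\mu z)^{-1}\bigr) = Q_\mu$ relate $D$-kernels, the $H^2$-kernels (which sit in $L^2_a$), and the Bergman kernels through $U$. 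Transporting the kernel identity via $U$ yields $P_\HN (1-\bar\mu z)^{-1} = \sum_i c_i(\mu) (1-\bar\mu_i z)^{-1}$ in $L^2_a$. From this one would deduce the corresponding formula for $P_\HN Q_\mu$, showing it lies in $\mathrm{span}\{Q_{\mu_i}\}$—the $M_\phi^*$-eigenspace on $L^2_a$ at $\overline{\phi(\mu)}$. Since $\{Q_\mu\}_{\mu \in \D}$ is total in $L^2_a$, this gives $[P_\HN, M_\phi^*] = 0$ in $L^2_a$, as desired. The main obstacle is exactly this last step: passing from the $P_\HN$-formula for $H^2$-kernels to the analogous formula for Bergman kernels requires delicate analysis of the dependence on $\bar\mu$—since $Q_\mu = \partial_{\bar\mu}(1-\bar\mu z)^{-1}/z$ involves differentiation combined with division by $z$—and careful handling of the preimage structure $\phi^{-1} \circ \phi$ near critical values of $\phi$, where the generic eigenspace description degenerates.
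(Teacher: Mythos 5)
Your reduction is correct as far as it goes: the identity $UM_\phi - M_\phi U = M_{z\phi'}$ holds, $U\HM^\perp = (U\HM)^\perp$ by unitarity, and the theorem is indeed equivalent to the two inclusions $z\phi'\,\HM \subseteq U\HM$ and $z\phi'\,\HM^\perp \subseteq U\HM^\perp$, i.e.\ to $\langle z\phi' f, g\rangle_{H^2} = \langle z\phi' f, (zg)'\rangle_{L^2_a} = 0$ for $f \in \HM$, $g \in \HM^\perp$. But that inclusion is where the entire content of the theorem lives, and your proposal does not establish it. The reproducing-kernel sketch breaks down exactly where you flag the ``main obstacle'': starting from $P_{U\HM}\bigl((1-\bar\mu z)^{-1}\bigr) = \sum_i c_i(\mu)(1-\bar\mu_i z)^{-1}$ and using $Q_\mu = \partial_{\bar\mu}\bigl(\bar\mu(1-\bar\mu z)^{-1}\bigr)$, differentiation produces, besides multiples of $Q_{\mu_i}$, terms proportional to $(1-\bar\mu_i z)^{-1}$ coming from $\partial_{\bar\mu}\bigl(\bar\mu c_i(\mu)\bigr)$ and from the chain rule through the branches $\mu\mapsto\mu_i(\mu)$ (note also that the needed smoothness of $c_i(\mu)$ is never justified). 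Showing that these extra components cancel so that $P_{U\HM}Q_\mu \in \mathrm{span}\{Q_{\mu_i}\}$ is essentially a restatement of the theorem, so the argument is circular at its decisive step.

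The paper closes this gap by an elementary device you do not use: the Richter--Sundberg local Dirichlet integral formula $D(\phi^m u,\phi^m v) = m\int_\T \sum_{i=1}^n P_{\lambda_i}(\zeta)\,u(\zeta)\overline{v(\zeta)}\,\frac{|d\zeta|}{2\pi} + D(u,v)$, where $\lambda_1,\dots,\lambda_n$ are the zeros of $\phi$. Feeding into it the full family of orthogonality relations $\langle\phi^{m+k}f,\phi^{m+j}g\rangle_D = 0$ for $f\in\HM$, $g\in\HM^\perp$ and all $m,k,j\ge 0$, and letting $m$ vary, isolates the coefficient of $m$ and yields $\int_\T\sum_i P_{\lambda_i}\,\phi^k f\,\overline{\phi^j g}\,\frac{|d\zeta|}{2\pi} = \langle z\phi'\phi^k f,\phi\phi^j g\rangle_{H^2} = 0$; summing gives $\langle z(\phi^l)'f,g\rangle_{H^2}=0$ and hence $\langle \phi^l(zf)',(zg)'\rangle_{L^2_a}=0$ for all $l\ge 1$, which is exactly the inclusion your reduction requires. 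To complete your outline, replace the kernel computation by this use of the local Dirichlet integral (or an equivalent quantitative use of the hypothesis that $\HM$ reduces all powers of $M_\phi$ in the $D$-inner product).
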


Let $\phi$ be a finite Blaschke product of order $n$. If there exists $\lambda \in \D$ such that $\phi = \varphi_\lambda(z^n)$, then by \cite{SZ02}, $M_\phi$ has $2^n - 2$ proper reducing subspaces on $D$. When $2 \leq n \leq 3$, by Theorem \ref{prodortt}, $M_\phi$ has nontrivial reducing subspaces on $D$ if and only if $\phi$ is equivalent to $z^n$. Thus it is natural to ask whether this is the case when $n \geq 4$. Surprisingly, the case is different when $n = 4$.

For a finite Blaschke product $\phi$ we say $\phi$ is decomposable if there exist two Blaschke prodcuts $\psi_1$ and $\psi_2$ with orders greater than $1$ such that $\phi(z) = \psi_1(\psi_2(z))$.

\begin{thm}\label{scnfrsd}
Let $\phi$ be a finite Blaschke product of order $n = 4$. Then one of the following holds.\\
(i) If $\phi$ is equivalent to $z^4$, i.e. there are $\lambda \in \D, |a| = 1$ such that $\phi = a \varphi_\lambda(z^4)$, then $M_\phi$ has exact four nontrivial minimal reducing subspaces on $D$.\\
(ii) If $\phi = \psi_1\circ \psi_2$ is decomposable and $\psi_2$ is equivalent to $z^2$, furthermore, if $\phi$ is not equivalent to $z^4$, then $M_\phi$ has exact two nontrivial minimal reducing subspaces on $D$.\\
(iii) If $\phi = \psi_1\circ \psi_2$ is decomposable and $\phi$ is equivalent to $(z\varphi_\gamma)^2$ for some $\gamma \in \D \backslash \{0\}$, then $M_\phi$ has exact two nontrivial minimal reducing subspaces on $D$.\\
(iv) If $\phi = \psi_1\circ \psi_2$ is decomposable, furthermore, if $\psi_2$ is not equivalent to $z^2$ and $\phi$ is not equivalent to $(z\varphi_\gamma)^2$ for any $\gamma \in \D \backslash \{0\}$, then $M_\phi$ is irreducible on $D$.\\
(v) If $\phi$ is not decomposable, then $M_\phi$ is irreducible on $D$.
\end{thm}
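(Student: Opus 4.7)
The plan is to reduce the classification of reducing subspaces of $M_\phi$ on $D$ to the well-understood Bergman-space picture via Theorem~\ref{rdsbdbs}, and then to settle each case by an explicit construction (for (i)--(iii)) or a rigidity argument (for (iv)--(v)). First I would verify that every order-$4$ Blaschke product falls into exactly one of the five cases: either $\phi$ is indecomposable (case (v)) or $\phi = \psi_1\circ\psi_2$ with each factor of order $2$, and the four decomposable sub-cases are distinguished by whether $\psi_2\sim z^2$, whether $\phi\sim z^4$, and whether $\phi\sim (z\varphi_\gamma)^2$ for some $\gamma\in\D\backslash\{0\}$. Since the reducing-subspace lattice is invariant under the equivalence $\phi_2 = a\varphi_\lambda\circ\phi_1$, it suffices to treat each case after a convenient normalization.

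For the existence statements in (i)--(iii): in (i), the hypothesis $\phi\sim z^4$ reduces the count to the Stessin--Zhu calculation \cite{SZ02} for $M_{z^4}$, whose four minimal reducing subspaces are $\overline{\mathrm{span}}\{z^{4k+j} : k\geq 0\}$, $j = 0,1,2,3$. In (ii), the assumption $\psi_2\sim z^2$ combined with Theorem~\ref{prodortt} gives two minimal reducing subspaces $\HM_\pm$ of $M_{\psi_2}$ on $D$; since $M_\phi = M_{\psi_1\circ\psi_2}$ equals $\psi_1(M_{\psi_2})$ in the analytic functional calculus, any projection that commutes with $M_{\psi_2}$ commutes with $M_\phi$, so $\HM_\pm$ reduce $M_\phi$ as well. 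Case (iii) is more delicate: after normalization, $\phi = \eta^2$ with $\eta = z\varphi_\gamma$, and one must produce two minimal reducing subspaces from the squared structure alone. A natural approach is to use the Bergman-space analysis of \cite{DPW12} applied to $M_\phi = M_\eta^2$, which furnishes a reducing pair tied to the $\pm\eta$-dichotomy on $L^2_a$, and then pull this pair back through $U$ and verify $D$-invariance directly.

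The upper bound is obtained uniformly by pushing any $D$-reducing subspace to $L^2_a$ via Theorem~\ref{rdsbdbs}: $U\HM = (z\HM)'$ must lie in the lattice of $L^2_a$-reducing subspaces of $M_\phi$, which is completely described by \cite{DPW12} in terms of the connected components of $\{(z,w) \in \D\times\D : \phi(z) = \phi(w)\}$. In cases (iv) and (v) this lattice has only the two minimal members $M_0(\phi)$ and $M_0(\phi)^\perp$ from \cite{GSZZ09,SZ03}; a Taylor-coefficient computation then shows that neither is of the form $(z\HM)'$ for any reducing $\HM\subset D$, proving irreducibility. In cases (i)--(iii) the Bergman lattice is larger, but the subspaces produced in Step~2 already exhaust the image of the map $\HM\mapsto U\HM$, yielding the exact counts $4$, $2$, and $2$. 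The main obstacle is case (iii), where the reducing pair is not visible from the decomposition $\phi = \psi_1\circ\psi_2$ the way it is in case (ii) and must be built from the $\eta^2$-structure; the secondary difficulty is the Taylor-coefficient calculation needed to rule out that $U\HM$ equals $M_0(\phi)$ or $M_0(\phi)^\perp$ in cases (iv) and (v), which requires precise control over how $U$ interacts with the Bergman-reducing structure.
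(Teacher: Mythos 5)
Your overall strategy---push a $D$-reducing subspace $\HM$ to $L^2_a$ via $\HM\mapsto (z\HM)'$, classify the Bergman candidates using \cite{DPW12} and \cite{SZZ10}, and then decide case by case which candidates actually pull back---is the same as the paper's. But there is a concrete error in your treatment of case (iv): you assert that in cases (iv) and (v) the Bergman lattice ``has only the two minimal members $M_0(\phi)$ and $M_0(\phi)^\perp$.'' That is true only in case (v), where $\phi$ is indecomposable. In case (iv) $\phi=\psi_1\circ\psi_2$ \emph{is} decomposable, so by \cite[Theorem 2.1]{SZZ10} the minimal reducing subspaces of $M_\phi$ on $L^2_a$ are $M_0(\phi)$, $M_0(\psi_2)\ominus M_0(\phi)$ and $M_0(\psi_2)^\perp$---three of them. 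Ruling out $(z\HM)'=M_0(\psi_2)$ and $(z\HM)'=M_0(\phi)\oplus M_0(\psi_2)^\perp$ is the entire content of the paper's Lemma \ref{nrfdczes}, and it is exactly where the hypotheses ``$\psi_2$ not equivalent to $z^2$'' and ``$\phi$ not equivalent to $(z\varphi_\gamma)^2$'' enter (e.g.\ $(z\HM)'=M_0(\psi_2)$ forces $\varphi_{\beta_1}(\gamma)\varphi_{\beta_2}(\gamma)=0$ and hence $\phi\sim(z\varphi_\gamma)^2$). With your two-member lattice those hypotheses are never used, which is a warning sign: the same argument would ``prove'' irreducibility in cases (ii) and (iii) as well.

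Case (iii) is also not yet a proof. The correct pair is $\HM$ with $(z\HM)'=M_0(\psi)\ominus M_0(\phi)$ for $\psi=z\varphi_\gamma$, i.e.\ $\HM=\mathrm{span}\{\psi^{2j+1}/z: j\geq 0\}$, together with $\HM^\perp$, whose Bergman image $M_0(\phi)\oplus M_0(\psi)^\perp$ is \emph{not} minimal in $L^2_a$; so ``the subspaces produced already exhaust the image'' conceals the need to rule out $M_0(\phi)$ and $M_0(\psi)^\perp$ individually, and the asymmetry $\dim(\HM\ominus\phi\HM)=1$, $\dim(\HM^\perp\ominus\phi\HM^\perp)=3$ is invisible in your description. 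More importantly, the $M_\phi^*$-invariance of $\mathrm{span}\{\psi^{2j+1}/z\}$ in $D$ is not a formal consequence of the Bergman picture (the analogous pullback of $M_0(\phi)$ is \emph{not} $M_\phi^*$-invariant unless $\phi\sim z^4$, by Theorem \ref{dgridtvb}); the paper establishes it by the explicit computation $M_\phi^*(\psi/z)=0$ and $M_\phi^*(\psi^{2j+1}/z)=\frac{2j+1}{2j-1}\,\psi^{2j-1}/z$. You flag this as ``the main obstacle'' but supply no mechanism, and the ``$\pm\eta$-dichotomy'' you invoke does not identify the correct pair. The existence arguments in (i) and (ii) (Stessin--Zhu, and the functional-calculus observation that reducing subspaces of $M_{\psi_2}$ reduce $M_{\psi_1\circ\psi_2}$) are fine, though minimality in (ii) still requires showing $(z\HN)'=M_0(\phi)$ forces $\phi\sim z^4$.
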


We remark here that in the case (ii) above, the two minimal reducing subspaces $M_1$ and $M_2$ satisfy $\dim (\HM_i \ominus \phi \HM_i) = 2, i = 1, 2$, but for the case (iii) above, the two minimal reducing subspaces $M_1$ and $M_2$ satisfy $\dim (\HM_1 \ominus \phi \HM_1) = 1, \dim (\HM_2 \ominus \phi \HM_2) = 3$.

It is known that for a finite Blaschke product $\phi$, $M_\phi$ is unitarily equivalent to $M_{z^n} (n > 1)$ on $L^2_a$ if and only if $\phi = a \varphi_\lambda^n$ for some $\lambda \in \D$ and $|a| = 1$ (\cite{GZ11, SZZ08}). But this is not the case in the Dirichlet space. On the Dirichlet space with the norm $\|\cdot\|_1$ and $\|\cdot\|_2$, Zhao \cite{Zh11}, Chen and Xu \cite{CX13}, respectively, showed that for a finite Blaschke product $\phi$ of order $n > 1$, $M_\phi$ is unitarily equivalent to $M_{z^n}$ on $D$ if and only if $\phi$ is a constant multiple of $z^n$. On the Dirichlet space with the norm $\|\cdot\|_D$, Chen and Lee \cite{CL14} obtained the same result when the order of $\phi$ is $2$, we show that this is actually true when the order of $\phi$ is $n > 1$.
\begin{thm}\label{fbueztcd}
Let $\phi$ be a finite Blaschke product of order $n > 1$. Then $M_\phi$ is unitarily equivalent to $M_{z^n}$ on $D$ if and only if $\phi = az^n, |a| = 1$.
\end{thm}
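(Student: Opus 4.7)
The easy direction is direct. If $\phi = az^n$ with $|a|=1$, pick $b \in \partial\D$ with $b^n = \bar a$; then $W_bf(z) = f(bz)$ is unitary on $D$ because it multiplies the $k$-th Fourier coefficient of $f$ by $b^k$, and one checks $(W_bM_\phi f)(z) = a(bz)^n f(bz) = z^n f(bz) = (M_{z^n}W_bf)(z)$. The content is the converse, so I will assume $W:D\to D$ is unitary with $WM_\phi = M_{z^n}W$ and prove $\phi = az^n$ for some unimodular $a$.

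The first step pins down $W(1)$. Iterating the intertwining gives $W(\phi^k) = z^{nk}f$ where $f=W(1)$, so $\|\phi^k\|_D^2 = \|z^{nk}f\|_D^2$. On the left, $\|\phi^k\|_{H^2}^2 = 1$ since $\phi$ is inner, and $D(\phi^k) = nk$ is the degree of $\phi^k$ as a Blaschke product, so $\|\phi^k\|_D^2 = nk+1$. On the right, direct expansion from (\ref{snorod}) gives $\|z^{nk}f\|_D^2 = \|f\|_D^2 + nk\|f\|_{H^2}^2$. Matching at $k=0$ and at some $k\geq 1$ forces $\|f\|_D = \|f\|_{H^2} = 1$, which in turn forces $f$ to be a unimodular constant $a$. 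Applying $WM_\phi^* = M_{z^n}^* W$ to $1$ then gives $\overline{\phi(0)} = M_\phi^*(1) = W^*(M_{z^n}^*a) = 0$, so $\phi(0) = 0$.

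The second step isolates a single rigid operator identity. On $D$, the operator $M_{z^n}^*M_{z^n}$ is diagonal in $\{z^j\}$ with eigenvalue $(j+n+1)/(j+1)$, so the constant function $1$ is an eigenvector with eigenvalue $n+1$. Since $W^*(1) = \bar a$ and $W^*M_{z^n}^*M_{z^n}W = M_\phi^*M_\phi$, the constant function is also an eigenvector of $M_\phi^*M_\phi$ with eigenvalue $n+1$. Writing $\langle M_\phi^*M_\phi(1), v\rangle_D = (n+1)\langle 1, v\rangle_D$ for every $v \in D$, using $|\phi|=1$ on $\partial\D$ to cancel the $H^2$ contribution, and taking complex conjugates, this reduces to the single identity
\[
\int_\D \overline{\phi'(z)}\,(\phi v)'(z)\,dA(z) = n\,v(0), \qquad v \in D.
\]

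Finally, I specialize $v = z^k$, $k\geq 1$, so the right-hand side vanishes. Expanding the integrand and computing both integrals in Fourier coefficients of $\phi$, one uses that $|\phi|^2 = 1$ on $\partial\D$ to kill the convolution $\sum_l \hat\phi(l)\overline{\hat\phi(l+k)}$ for $k\geq 1$, and the identity collapses to $\widehat{z\phi'\overline{\phi}}(-k) = 0$ on $\partial\D$ for every $k\geq 1$. Since $\overline{\phi} = 1/\phi$ on $\partial\D$ and $\phi'/\phi = \sum_j \varphi_{\lambda_j}'/\varphi_{\lambda_j}$ for $\phi = \varphi_{\lambda_1}\cdots\varphi_{\lambda_n}$, the partial fraction $\varphi_\lambda'/\varphi_\lambda = 1/(z-\lambda) + \bar\lambda/(1-\bar\lambda z)$ and a Laurent expansion on $\partial\D$ give $\widehat{z\phi'/\phi}(-k) = \sum_{j=1}^n \lambda_j^k$ for $k \geq 1$. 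The identity thus forces $\sum_j \lambda_j^k = 0$ for every $k \geq 1$, and Newton's identities force $\lambda_j = 0$ for every $j$, so $\phi$ is a unimodular multiple of $z^n$, as claimed. The main obstacle is precisely this final translation from an abstract eigenvalue equation on $D$ to an algebraic condition on the zero set of $\phi$: it is the mixed $H^2$-plus-Dirichlet-integral character of the norm (\ref{snorod}) that makes the identity rigid enough to rule out the M\"obius-type equivalences that exist in the Hardy and Bergman settings.
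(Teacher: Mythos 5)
Your proof is correct, and it takes a genuinely different route from the paper's. The paper derives the ``only if'' direction from its reducing-subspace machinery: unitary equivalence with $M_{z^n}$ forces $M_\phi$ to have $n$ mutually orthogonal minimal reducing subspaces on $D$ whose sum is $D$; Theorem \ref{rdsbdbs} transplants these to $L^2_a$, where the Bergman-space classification (\cite{DPW12, GH11, SZZ08}) forces $\phi$ to be equivalent to $\varphi_\alpha^n$; Remark \ref{irdnznpc} then forces $\alpha=0$, and Theorem \ref{petplznidd} eliminates $\phi=\varphi_\lambda(z^n)$ with $\lambda\neq 0$. Your argument is self-contained and more elementary: the norm comparison $\|\phi^k\|_D^2=nk+1=\|z^{nk}W1\|_D^2$ pins $W1$ down as a unimodular constant (I checked: $\|W1\|_{H^2}=\|W1\|_D=1$ does force $D(W1)=0$), so $1$ is an eigenvector of $M_\phi^*M_\phi=W^*M_{z^n}^*M_{z^n}W$ with eigenvalue $n+1$; the split of $\|\cdot\|_D$ into Hardy plus Dirichlet parts converts this, after the $H^2$ term cancels via $|\phi|=1$ on $\T$ and the convolution identity $\sum_m\hat\phi(m)\overline{\hat\phi(m+k)}=0$, into $\widehat{z\phi'\overline{\phi}}(-k)=\sum_j\lambda_j^k=0$ for all $k\geq 1$, and Newton's identities place all zeros at the origin. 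The Fourier computation is right: the surviving term is exactly $\sum_m m\,\hat\phi(m)\overline{\hat\phi(m+k)}$, which is the $(-k)$-th coefficient of $z\phi'\overline{\phi}=\sum_j P_{\lambda_j}$ on $\T$, namely $\sum_j\lambda_j^k$ --- the same boundary identity the paper uses in Theorem \ref{bpoiddfb}. What each approach buys: the paper's proof recycles structural results already established and makes visible the link between unitary equivalence and the reducing-subspace lattice, but it leans on the deep classification of \cite{DPW12}; yours avoids the Bergman-space detour entirely and subsumes Theorem \ref{petplznidd} as a special case, since it never needs to isolate $\phi=\varphi_\lambda(z^n)$ separately.
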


\section{The case $n = 2$ or $3$}
We first establish some results which are of independent interest. The following lemma is straightforward and it will be used frequently in this paper.
\begin{lem}\label{iiphbd}
Let $p, q$ be polynomials. Then
$$\langle p, q\rangle_D = \langle (zp)', q\rangle_{H^2} = \langle (zp)', (zq)'\rangle_{L^2_a}.$$
\end{lem}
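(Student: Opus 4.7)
The plan is to verify both equalities by a direct computation on Taylor coefficients, since all three inner products are diagonalized by the monomial basis $\{z^n\}$ but with different weights. I would first record the weights: $\langle z^n, z^m \rangle_D = (n+1)\delta_{nm}$ by \eqref{snorod}, $\langle z^n, z^m \rangle_{H^2} = \delta_{nm}$ by definition of the Hardy inner product, and $\langle z^n, z^m \rangle_{L^2_a} = \delta_{nm}/(n+1)$, the last of which can be read off from the Taylor expansion $(1-\overline{\lambda} z)^{-2} = \sum_n (n+1)\overline{\lambda}^n z^n$ of the Bergman kernel $Q_\lambda$. Meanwhile the operator $f \mapsto (zf)'$ acts on monomials as $z^n \mapsto (n+1)z^n$, i.e.\ it multiplies the $n$-th Taylor coefficient by $n+1$.

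With this bookkeeping in place, I would write $p(z) = \sum_n a_n z^n$ and $q(z) = \sum_n b_n z^n$ as finite sums, so that $(zp)' = \sum_n (n+1) a_n z^n$ and $(zq)' = \sum_n (n+1) b_n z^n$. Then $\langle p,q \rangle_D = \sum_n (n+1) a_n \overline{b_n}$ from the first weight, $\langle (zp)', q \rangle_{H^2} = \sum_n (n+1) a_n \overline{b_n}$ from the second weight, and
\[
\langle (zp)', (zq)' \rangle_{L^2_a} = \sum_n \frac{(n+1)^2 a_n \overline{b_n}}{n+1} = \sum_n (n+1) a_n \overline{b_n}
\]
from the third weight, the two factors of $n+1$ coming from the two applications of $f \mapsto (zf)'$ exactly compensating the weight $1/(n+1)$. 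All three sums coincide, proving the lemma.

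There is no real obstacle here; the content is purely that the three monomial weights $n+1$, $1$, $1/(n+1)$ compose correctly with the operator $f \mapsto (zf)'$. In particular the second equality is precisely the coefficient-level reason that $U : D \to L^2_a$, $Uf = (zf)'$, is an isometry, which is exactly how this lemma will be used in the proof of Theorem \ref{rdsbdbs}.
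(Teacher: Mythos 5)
Your computation is correct, and it is exactly the routine coefficient check the paper has in mind: the lemma is stated without proof there (the author calls it ``straightforward''), and the three monomial weights $n+1$, $1$, $1/(n+1)$ together with the action $z^n \mapsto (n+1)z^n$ of $f \mapsto (zf)'$ are precisely the point. Nothing to add.
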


It was shown in \cite{GZ11} that for a finite Blaschke product $\phi$, the set $\{\phi^k: k = 0, 1, \cdots\}$ is an orthogonal set in $L^2_a$ if and only if $\phi = a z^n$ for some unimodular constant $a$ and $n \geq 1$. This result is nontrivial in the Bergman space, but the analogous result in $D$ is not complicated. Let $\T$ be the unit circle and let $P_\lambda(\zeta) = \frac{1-|\lambda|^2}{|\zeta - \lambda|^2}, \lambda \in \D, \zeta \in \T$ be the Poisson kernel.

\begin{thm}\label{bpoiddfb}
Let $\phi$ be a finite Blaschke product. Then $\{\phi^k: k = 0, 1, \cdots\}$ is an orthogonal set in $D$ if and only if $\phi(0) = 0$.
\end{thm}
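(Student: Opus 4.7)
The plan is to compute the inner product $\langle \phi^j, \phi^k\rangle_D$ directly for every pair $0 \leq j < k$ and show it reduces to an $H^2$ quantity depending only on $\phi(0)$. Splitting according to the definition (\ref{snorod}),
\[
\langle \phi^j,\phi^k\rangle_D = \langle \phi^j,\phi^k\rangle_{H^2} + \int_\D (\phi^j)'\,\overline{(\phi^k)'}\,dA.
\]

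For the $H^2$ term, since $|\phi|=1$ almost everywhere on $\T$, one has $\phi^j\,\overline{\phi^k} = \phi^{j-k} = \overline{\phi^{k-j}}$ on $\T$; and since $\phi^{k-j}$ is bounded analytic on $\D$, the mean value property yields $\langle \phi^j,\phi^k\rangle_{H^2} = \overline{\phi(0)^{k-j}}$.

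For the Dirichlet term, writing $(\phi^j)' = j\phi^{j-1}\phi'$ and similarly for $k$ reduces the integral to $jk \int_\D \phi^{j-1}\overline{\phi^{k-1}}\,|\phi'|^2\,dA$. The key ingredient is the standard area formula for a finite Blaschke product of order $n$: since each fiber $\phi^{-1}(w)\subset \D$ consists of exactly $n$ points counted with multiplicity,
\[
\int_\D g(\phi(z))\,|\phi'(z)|^2\,dA(z) = n\int_\D g(w)\,dA(w)
\]
for every $g\in L^1(\D)$. Applying this with $g(w) = w^{j-1}\bar{w}^{k-1}$ and using orthogonality of monomials in $L^2_a$ kills the integral whenever $j\neq k$ and $j,k\geq 1$. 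The edge case $j=0$ is trivial since $(\phi^0)' = 0$.

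Combining the two pieces, $\langle \phi^j,\phi^k\rangle_D = \overline{\phi(0)^{k-j}}$ for all $0 \leq j < k$. Thus $\{\phi^k\}$ is orthogonal in $D$ iff $\phi(0)^m = 0$ for every $m\geq 1$, iff $\phi(0) = 0$; in fact, for the ``only if'' direction the single identity $\langle 1,\phi\rangle_D = \overline{\phi(0)}$ already forces $\phi(0)=0$. I do not foresee a real obstacle: the computation is routine, with the only nontrivial input being the area formula, a standard fact about finite Blaschke products viewed as $n$-to-1 proper holomorphic self-maps of $\D$.
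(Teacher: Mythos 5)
Your proof is correct, and it takes a genuinely different route from the paper's. The paper never splits off the area integral: it applies Lemma \ref{iiphbd} to write $\langle \phi^k,\phi^j\rangle_D=\langle (z\phi^k)',\phi^j\rangle_{H^2}$ and then evaluates the resulting boundary integral via the identity $z\phi'\overline{\phi}=\sum_{i=1}^n P_{\lambda_i}$ on $\T$, so that everything reduces to the values $\overline{\phi^{j-k}(\lambda_i)}$ at the zeros $\lambda_i$ of $\phi$, which vanish. You instead decompose $\langle\cdot,\cdot\rangle_D$ into its $H^2$ part plus the Dirichlet form, compute the $H^2$ part by the mean value property, and kill the Dirichlet part with the change-of-variables (area) formula for the $n$-to-$1$ proper map $\phi$ together with orthogonality of monomials in $L^2_a$. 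Both arguments isolate the same underlying phenomenon: the seminorm part $D(\phi^j,\phi^k)$ vanishes for $j\neq k$ for \emph{every} finite Blaschke product, and the hypothesis $\phi(0)=0$ enters only through the $H^2$ term. Your version has the advantage of producing the explicit formula $\langle\phi^j,\phi^k\rangle_D=\overline{\phi(0)^{k-j}}$ for $j<k$, at the cost of invoking the area formula as an external standard fact; the paper's version stays entirely on the boundary and reuses the Poisson-kernel identity that recurs later in the proofs of Theorems \ref{dgridtvb} and \ref{viorbdb}.
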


\begin{proof}
If $\{\phi^k: k = 0, 1, \cdots\}$ is an orthogonal set in $D$, then
$$\phi(0) = \langle \phi, 1\rangle_D = 0.$$
Conversely, if $\phi(0) = 0$, suppose $\phi = \varphi_{\lambda_1}\cdots \varphi_{\lambda_n}$. Let $k, j \in \N, k < j$, then
\begin{align*}
&\langle \phi^k, \phi^j\rangle_D\\
&= \langle(z\phi^k)', \phi^j\rangle_{H^2}\\
& = \langle z k \phi^{k-1}\phi', \phi^j\rangle_{H^2}\\
& = k \langle z\phi', \phi^{j-k+1}\rangle_{H^2}\\
& = k\int_\T \sum_{i = 1}^n P_{\lambda_i}(\zeta) \overline{\phi^{j-k}(\zeta)}\frac{|d\zeta|}{2\pi}\\
& = k\sum_{i=1}^n \overline{\phi^{j-k}(\lambda_i)}\\
& = 0,
\end{align*}
where we have used Lemma \ref{iiphbd} and the fact that $z\phi' \overline{\phi} = \sum\limits_{i = 1}^n P_{\lambda_i}(z)$ on $\T$.
\end{proof}

The following result is proved in \cite{SZ03}, we prove it in a slightly different way. Suppose $\HH$ be a Hilbert space, $A \subseteq \HH$, we denote by $\text{span}A$ the closed linear span of $A$ in $\HH$.
\begin{thm}[\cite{SZ03}]\label{ddrfbib}
Let $\phi$ be a finite Blaschke product of order $n \geq 2$. Then $M_0(\phi) = \text{span}\{\phi'\phi^j: j = 0, 1, \cdots\}$ is a nontrivial minimal reducing subspace of $M_\phi$ on $L^2_a$.
\end{thm}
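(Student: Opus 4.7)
The plan is to establish three properties of $M_0(\phi)$: that $\{\phi'\phi^j : j \geq 0\}$ is an orthogonal family in $L^2_a$, that $M_0(\phi)$ is reducing for $M_\phi$, and that $M_0(\phi)$ contains no proper nonzero reducing subspace. For orthogonality I will convert $\langle \phi'\phi^j, \phi'\phi^k\rangle_{L^2_a}$ into a boundary integral by integration by parts, writing $\phi'\phi^j \overline{\phi'\phi^k}$ as an appropriate $\partial_z$- (or $\partial_{\bar z}$-) derivative and appealing to $|\phi| = 1$ on $\T$ together with the identity $z \phi'\overline{\phi} = \sum_i P_{\lambda_i}$ that was already derived in the proof of Theorem \ref{bpoiddfb}. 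A direct calculation then yields $\langle \phi'\phi^j, \phi'\phi^k\rangle_{L^2_a} = \tfrac{n}{j+1}\delta_{jk}$, so $M_0(\phi)$ is the closed linear span of a pairwise orthogonal family.

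Invariance of $M_0(\phi)$ under $M_\phi$ is immediate from $\phi \cdot \phi'\phi^j = \phi'\phi^{j+1}$. The heart of the argument is invariance under $M_\phi^*$, which reduces to showing $\langle \phi'\phi^j, \phi g\rangle_{L^2_a} = 0$ for every polynomial $g$ with $g \perp M_0(\phi)$. Using $\phi'\phi^j \overline{\phi}\,\overline{g} = \tfrac{1}{j+1} \partial_z(\phi^{j+1} \overline{\phi}\,\overline{g})$ (valid because $\overline{\phi}\,\overline{g}$ is antiholomorphic) and invoking Stokes' theorem together with $\overline{\phi} = \phi^{-1}$ on $\T$, this inner product collapses to the boundary pairing $\tfrac{1}{j+1} \langle \phi^j, \zeta g\rangle_{H^2}$. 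A parallel integration-by-parts identity converts the hypothesis $g \perp M_0(\phi)$ into $\langle g\zeta, \phi^m\rangle_{H^2} = 0$ for every $m \geq 1$, and the case $m = 0$ is automatic since $g\zeta$ has vanishing constant Fourier coefficient. Together these give $\langle \phi^j, \zeta g\rangle_{H^2} = 0$ for all $j \geq 0$, so $M_\phi^*(\phi'\phi^j) \in M_0(\phi)$.

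The same computation yields the explicit formulas $M_\phi^*(\phi'\phi^j) = \tfrac{j}{j+1} \phi'\phi^{j-1}$ for $j \geq 1$ and $M_\phi^* \phi' = 0$. Consequently, the orthonormal basis $e_j = \sqrt{(j+1)/n}\,\phi'\phi^j$ of $M_0(\phi)$ satisfies $M_\phi e_j = \sqrt{(j+1)/(j+2)}\,e_{j+1}$, matching the Bergman shift $M_z$ on the orthonormal basis $\sqrt{j+1}\,z^j$ of $L^2_a$. Hence $M_\phi|_{M_0(\phi)}$ is unitarily equivalent to $M_z$, and since $M_z^* M_z - M_z M_z^*$ is a positive diagonal operator with simple spectrum, $M_z$ is irreducible; this gives minimality of $M_0(\phi)$. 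Nontriviality is easy: $\phi' \not\equiv 0$ gives $M_0(\phi) \neq \{0\}$, while $M_0(\phi) \neq L^2_a$ follows from a multiplicity count, since $M_z$ is cyclic but $M_\phi$ on $L^2_a$ has multiplicity $n \geq 2$. The main obstacle is the $M_\phi^*$-invariance step: orthogonality of the family alone only pins down the diagonal component of $M_\phi^*(\phi'\phi^j)$, and ruling out any component in $M_0(\phi)^\perp$ is exactly what the boundary-integral reduction to an $H^2$-pairing accomplishes, mirroring the Poisson-kernel computation used in Theorem \ref{bpoiddfb}.
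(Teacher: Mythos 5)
Your proof is correct, and its computational core coincides with the paper's: both reduce the Bergman pairings $\langle (\phi^{j+1})', \phi g\rangle_{L^2_a}$ to Hardy-space boundary pairings (your identity $\langle F', G\rangle_{L^2_a} = \langle F, \zeta G\rangle_{H^2}$ is exactly the step the paper extracts from Lemma \ref{iiphbd}), exploit $|\phi|=1$ on $\T$, and land on the explicit formulas $M_\phi^* \phi' = 0$ and $M_\phi^*(\phi'\phi^j) = \tfrac{j}{j+1}\phi'\phi^{j-1}$. The differences are worth noting. First, the paper pre-composes with $\varphi_{\lambda_1}$ so that $\phi\circ\varphi_{\lambda_1} = z\psi$ vanishes at the origin and then reads off Taylor coefficients of $\phi^j$; you integrate by parts directly against $\phi$, which is a bit cleaner and also yields the orthogonality $\langle \phi'\phi^j, \phi'\phi^k\rangle_{L^2_a} = \tfrac{n}{j+1}\delta_{jk}$ in the same stroke (the paper defers that computation to Remark \ref{rkodsrobn}). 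Second, for minimality the paper counts $\dim M_0(\phi)\ominus\phi M_0(\phi) = 1$ and uses that every nonzero reducing subspace has a nonzero wandering part, whereas you identify $M_\phi|_{M_0(\phi)}$ with the Bergman shift and invoke its irreducibility via the simple spectrum of the self-commutator; both are valid, and yours implicitly uses that a reducing subspace of $M_\phi$ contained in $M_0(\phi)$ reduces the restriction $M_\phi|_{M_0(\phi)}$, which holds precisely because $M_0(\phi)$ is itself reducing. One logical wrinkle: your intermediate reduction ``it suffices to check $\langle \phi'\phi^j, \phi g\rangle_{L^2_a} = 0$ for polynomials $g \perp M_0(\phi)$'' tacitly assumes that such polynomials are dense in $M_0(\phi)^\perp$, which is not justified; but the step is superfluous, since testing your boundary-pairing identity against \emph{all} polynomials already pins down the explicit formulas, and those place $M_\phi^*(\phi'\phi^j)$ in $M_0(\phi)$ outright.
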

\begin{proof}
It is clear that $M_0(\phi)$ is $M_\phi$ invariant, we show that $M_0(\phi)$ is $M_\phi^*$ invariant. Suppose $\phi = \varphi_{\lambda_1}\varphi_{\lambda_2}\cdots \varphi_{\lambda_n}, \lambda_i \in \D$ and $\phi(\varphi_{\lambda_1}) = a z \varphi_{\alpha_1}\cdots \varphi_{\alpha_{n-1}}, \alpha_i \in \D, |a| =1$. Let $\psi = a \varphi_{\alpha_1}\cdots \varphi_{\alpha_{n-1}}$. Then $\phi(\varphi_{\lambda_1}) = z \psi$ and for any polynomial $p$, we have
\begin{align*}
&\langle M_\phi^* \phi', p\rangle_{L^2_a}\\
& = \langle \phi', \phi p\rangle_{L^2_a}\\
& =  \langle \phi'(\varphi_{\lambda_1}) \varphi_{\lambda_1}', \phi(\varphi_{\lambda_1}) p(\varphi_{\lambda_1}) \varphi_{\lambda_1}'\rangle_{L^2_a}\\
& = \langle (z\psi)', z\psi p(\varphi_{\lambda_1}) \varphi_{\lambda_1}'\rangle_{L^2_a}\\
& = \langle \psi, z\psi p(\varphi_{\lambda_1}) \varphi_{\lambda_1}'\rangle_{H^2}\\
& = 0,
\end{align*}
therefore $M_\phi^* \phi' = 0$.

For $j \geq 1$, $\forall k \geq 0$, we have
\begin{align*}
&\langle M_\phi^* (\phi^{j+1})', z^k\rangle_{L^2_a} = \langle (\phi^{j+1})', \phi z^k\rangle_{L^2_a}\\
& = \langle [(z\psi)^{j+1}]', z\psi \varphi_{\lambda_1}^k \varphi_{\lambda_1}'\rangle_{L^2_a}\\
& = \langle \psi(z\psi)^j, z\psi \varphi_{\lambda_1}^k \varphi_{\lambda_1}'\rangle_{H^2}\\
& = \langle (z\psi)^j, z \varphi_{\lambda_1}^k \varphi_{\lambda_1}'\rangle_{H^2}\\
& = \int_\T (z\psi)^j \overline{z \varphi_{\lambda_1}^k \varphi_{\lambda_1}'} \frac{|dz|}{2\pi} \quad \text{let}~~ z = \varphi_{\lambda_1}(w)\\
& = \int_\T \phi^j(w) \overline{\varphi_{\lambda_1}(w) w^k \varphi_{\lambda_1}'(\varphi_{\lambda_1}(w))} \frac{1-|\lambda_1|^2}{|w - \lambda_1|^2}\frac{|dw|}{2\pi}\\
& = \int_\T \phi^j(w) w^{k+1}\frac{|dw|}{2\pi}\\
& = \widehat{(\phi^j)}(k+1),
\end{align*}
where $\widehat{(\phi^j)}(k+1)$ denotes the $(k+1)$-th coefficient of $\phi^j$, hence
\begin{align*}
M_\phi^* (\phi^{j+1})' (z)= \sum_{k=0}^\infty \widehat{(\phi^j)}(k+1) (k+1) z^k = (\phi^j)'(z).
\end{align*}
Thus $M_0(\phi)$ is $M_\phi^*$ invariant and $\dim M_0(\phi) \ominus \phi M_0(\phi) = 1$. It is clear that $M_0(\phi)$ is not $L^2_a$ or see the following remark, therefore $M_0(\phi) = \text{span}\{\phi'\phi^j: j = 0, 1, \cdots\}$ is a nontrivial minimal reducing subspace of $M_\phi$ on $L^2_a$.
\end{proof}

\begin{rem}\label{rkodsrobn}
$M_0(\phi)$ is called the distinguished reducing subspace of $M_\phi$ on $L^2_a ($\cite{GSZZ09}$)$, and $M_\phi|M_0(\phi)$ is unitarily equivalent to $M_z$ on $L^2_a$. In fact,
\begin{align*}
\|\phi'\phi^j\|_{L^2_a}^2& = \frac{1}{j+1}\langle (\phi^{j+1})', \phi^j\phi'\rangle_{L^2_a}\\
& = \frac{1}{j+1} \|\phi'\|_{L^2_a}^2\\
& = \frac{n}{j+1},
\end{align*}
therefore $\{\frac{\sqrt{j+1}}{\sqrt{n}}\phi' \phi^j\}$ is an orthonormal basis of $M_0(\phi)$, then $V: M_0(\phi) \rightarrow L^2_a$ defined by $V \frac{\sqrt{j+1}}{\sqrt{n}}\phi' \phi^j = \sqrt{j+1}z^j$ is unitary and satisfies $V^* M_z V = M_\phi$.

If $\phi = z \varphi_{\lambda_1} \cdots \varphi_{\lambda_{n-1}}$ and $\lambda_1, \cdots, \lambda_{n-1}$ are nonzero and distinct, then $M_0(\phi)^\perp = \text{span}\{\frac{\phi^j}{1- \overline{\lambda_i}z}: 1 \leq i \leq n-1, j =0, 1, \cdots\}$ $($\cite{SZ03}$)$.
\end{rem}

The following two theorems will be used in the proof of Theorem \ref{maintm1od2}.
\begin{thm}\label{dgridtvb}
Let $\phi = z\varphi_{\lambda_1}\cdots\varphi_{\lambda_{n-1}}$ be a finite Blaschke product. Then $\HM = \text{span}\{\frac{\phi^{j+1}}{z}: j = 0, 1, \cdots\}$ is a reducing subspace of $M_\phi$ on $D$ if and only if $\lambda_1 = \cdots = \lambda_{n-1} = 0$.
\end{thm}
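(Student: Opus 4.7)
The plan is as follows. The sufficiency is immediate: when all $\lambda_i=0$, $\phi=z^n$ and $\HM$ becomes the closed span of the orthogonal monomials $\{z^{jn+n-1}\}_{j\ge 0}$, which is obviously reducing for $M_{z^n}$ on $D$. For the necessity I will extract one concrete consequence of the reducing hypothesis, namely $M_\phi^*(\phi/z)=0$, and then unfold it into a family of power-sum equations $\sum_i\lambda_i^m=0$; Newton's identities will then force every $\lambda_i$ to vanish.

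Set $e_j=\phi^{j+1}/z$. Lemma~\ref{iiphbd} together with the orthogonality of $\{\phi^j\phi'\}$ in $L^2_a$ recorded in Remark~\ref{rkodsrobn} gives
\begin{align*}
\langle e_j,e_k\rangle_D=(j+1)(k+1)\langle\phi^j\phi',\phi^k\phi'\rangle_{L^2_a}=n(j+1)\,\delta_{jk},
\end{align*}
so $\{e_j\}_{j\ge 0}$ is an orthogonal basis of $\HM$. If $\HM$ is reducing then $M_\phi^* e_0\in\HM$, but $\langle M_\phi^* e_0,e_k\rangle_D=\langle e_0,\phi e_k\rangle_D=\langle e_0,e_{k+1}\rangle_D=0$ for every $k\ge 0$, so $M_\phi^* e_0=0$, i.e.\ $\langle \phi/z,\phi z^k\rangle_D=0$ for every $k\ge 0$.

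Writing $\phi=zg$ with $g=\varphi_{\lambda_1}\cdots\varphi_{\lambda_{n-1}}$ and applying Lemma~\ref{iiphbd} together with $(zg)'=\phi'$ and $(z^{k+1}\phi)'=(k+1)z^k\phi+z^{k+1}\phi'$ rewrites this last identity as
\begin{align*}
(k+1)\langle\phi',z^k\phi\rangle_{L^2_a}+\langle\phi',z^{k+1}\phi'\rangle_{L^2_a}=0,\qquad k\ge 0.
\end{align*}
The first summand equals $\int_\D\partial_z(|\phi|^2\bar z^k)\,dA$, and Stokes' theorem together with $|\phi|\equiv 1$ on $\bD$ reduces it to the boundary average of $\bar\zeta^{k+1}$, which vanishes. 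For the second summand I would use $|\phi'|^2=\tfrac14\Delta(|\phi|^2-1)$, apply Green's identity against the harmonic test function $\bar z^{k+1}$, and plug in the boundary identity $\partial_r|\phi|^2\big|_{r=1}=2\sum_i P_{\lambda_i}$ already used in the proof of Theorem~\ref{bpoiddfb}; the Poisson integrals of $\bar\zeta^{k+1}$ evaluate to $\bar\lambda_i^{k+1}$ and yield
\begin{align*}
\langle\phi',z^{k+1}\phi'\rangle_{L^2_a}=\sum_\mu\bar\mu^{k+1},
\end{align*}
where $\mu$ runs through the zeros $\{0,\lambda_1,\ldots,\lambda_{n-1}\}$ of $\phi$. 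The condition thus collapses to $\sum_{i=1}^{n-1}\lambda_i^m=0$ for every $m\ge 1$, which by Newton's identities is equivalent to $\lambda_1=\cdots=\lambda_{n-1}=0$.

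The main technical step is the clean evaluation of $\langle\phi',z^{k+1}\phi'\rangle_{L^2_a}$ as a power sum of the zeros of $\phi$: this is where the innerness of $\phi$ turns an abstract Bergman-space inner product into concrete information about the zero set, converting the reducing-subspace question into a classical symmetric-function problem.
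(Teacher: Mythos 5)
Your proposal is correct and is essentially the paper's proof: both arguments reduce the reducing hypothesis to the single identity $\langle \phi/z,\phi z^k\rangle_D=\sum_{i=1}^{n-1}\overline{\lambda_i^{k+1}}=0$ for all $k\ge 0$ and then conclude $\lambda_1=\cdots=\lambda_{n-1}=0$ (the paper by packaging this as $M_\phi^*(\phi/z)=\sum_i\overline{\lambda_i}K_{\lambda_i}\in\HM\cap\HM^\perp$, you via Newton's identities). The only real difference is in how the pairing is evaluated: the paper gets it in one line from the $H^2$ form of Lemma~\ref{iiphbd} and the boundary identity $z\phi'\overline{\phi}=\sum_i P_{\lambda_i}$ on $\T$, whereas your two-term Bergman-space decomposition with Stokes and Green's identity arrives at the same Poisson-kernel computation by a longer route.
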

\begin{proof}
If $\lambda_1 = \cdots = \lambda_{n-1} = 0$, then it is clear that $\HM$ is a reducing subspace of $M_\phi$ on $D$.

Suppose $\HM = \text{span}\{\frac{\phi^{j+1}}{z}: j = 0, 1, \cdots\}$ is a reducing subspace of $M_\phi$ on $D$. Let $\psi = \varphi_{\lambda_1}\cdots\varphi_{\lambda_{n-1}}$. Then for any $k \geq 0$, we have
\begin{align*}
\langle M_\phi^* \frac{\phi}{z}, z^k\rangle_D &= \langle \frac{\phi}{z}, \phi z^k\rangle_D\\
&= \langle\phi', \phi z^k\rangle_{H^2}\\
& = \langle z\phi', \phi z^{k+1}\rangle_{H^2}\\
& = \int_\T [1 + \sum_{i=1}^{n-1} P_{\lambda_i}(\zeta)] \overline{\zeta^{k+1}}\frac{|d\zeta|}{2\pi}\\
& = \sum_{i=1}^{n-1} \overline{\lambda_i^{k+1}},
\end{align*}
thus
$$M_\phi^* \frac{\phi}{z} = \sum_{k=0}^\infty \sum_{i=1}^{n-1} \overline{\lambda_i^{k+1}} \frac{1}{k+1}z^k = \sum_{i=1}^{n-1} \overline{\lambda_i}K_{\lambda_i}.$$
If $\lambda_i \neq 0$, then $K_{\lambda_i} \in \HM^\perp$. Since $M_\phi^* \frac{\phi}{z} \in \HM$, we conclude that $\lambda_i = 0, i = 1, \cdots, n-1$.

\end{proof}

Let $\phi = \varphi_{\lambda_1}\varphi_{\lambda_2}\cdots \varphi_{\lambda_n}, \lambda_i \in \D$. Then for $f,g \in D, k \geq 0$, we have
$$D(\phi^k f, \phi^k g) = \int_\T k \sum_{i=1}^n P_{\lambda_i}(\zeta) f(\zeta) \overline{g(\zeta)} \frac{|d\zeta|}{2\pi} + D(f,g),$$
see \cite{Zh09} or \cite{RS91}.

Recall that $U: D \rightarrow L^2_a$ defined by $Uf = (zf)'$ is unitary.
\begin{thm}\label{viorbdb}
Let $\phi$ be a finite Blaschke product. If $\HM$ is a reducing subspace of $M_\phi$ on $D$, then $U\HM = (z\HM)'$ is a reducing subspace of $M_\phi$ on $L^2_a$.
\end{thm}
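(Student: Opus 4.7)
The plan is to reduce the problem to showing $\phi \cdot U f \in U\HM$ for every $f \in \HM$; the analogous statement for $M_\phi^*$ on $L^2_a$ then follows by running the same argument on $\HMperp$, which is also reducing for $M_\phi$ on $D$. By unitarity of $U$, the inclusion $\phi U f \in U\HM$ is equivalent to $\langle \phi U f, U h\rangle_{L^2_a} = 0$ for every $h \in \HMperp$.

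The first step is the commutator identity
\[
\phi \cdot U f \;=\; U(\phi f) \;-\; z \phi' f,
\]
obtained from $U g = (zg)'$ and the product rule. Since $\phi f \in \HM$ and $h \in \HMperp$, unitarity gives $\langle U(\phi f), U h\rangle_{L^2_a} = \langle \phi f, h\rangle_D = 0$, so it remains to show that $\langle z \phi' f, U h\rangle_{L^2_a} = 0$. A short Taylor-coefficient computation, using $\|z^k\|_{L^2_a}^2 = 1/(k+1)$, yields the reduction
\[
\langle z \phi' f, U h\rangle_{L^2_a} \;=\; \langle \phi' f, h'\rangle_{L^2_a}.
\]

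Next I would convert this $L^2_a$-pairing into a boundary integral. Stokes' theorem applied to the $1$-form $G \bar h\, dz$ (for holomorphic $G$ and $h$, first on polynomials and then by density) gives
\[
\int_\D G(z) \, \overline{h'(z)}\, dA(z) \;=\; \int_\T \zeta \, G(\zeta) \, \overline{h(\zeta)} \, d\sigma(\zeta),
\]
where $d\sigma = |d\zeta|/(2\pi)$. Taking $G = \phi' f$ and invoking the identity $\zeta \phi'(\zeta) = \phi(\zeta) \sum_{i=1}^n P_{\lambda_i}(\zeta)$ on $\T$ (already used in the proof of Theorem \ref{bpoiddfb}), one arrives at
\[
\langle \phi' f, h'\rangle_{L^2_a} \;=\; \int_\T \phi \, f \, \bar h \sum_{i=1}^n P_{\lambda_i}\, d\sigma.
\]

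The conclusion then invokes the displayed formula just above the theorem, which at $k=1$ reads $\int_\T \sum_i P_{\lambda_i} F \bar G \, d\sigma = D(\phi F, \phi G) - D(F, G)$. Apply it with $F = \phi f$ and $G = h$. Because $\HM$ is reducing, both $\phi^2 f \in \HM, \phi h \in \HMperp$ and $\phi f \in \HM, h \in \HMperp$, so $\langle \phi^2 f, \phi h\rangle_D = 0$ and $\langle \phi f, h\rangle_D = 0$. Using $D(\cdot,\cdot) = \langle\cdot,\cdot\rangle_D - \langle\cdot,\cdot\rangle_{H^2}$ together with $|\phi|=1$ on $\T$, both $D(\phi^2 f, \phi h)$ and $D(\phi f, h)$ reduce to $-\int_\T \phi f \bar h\, d\sigma$, so their difference is zero. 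Hence $\langle \phi' f, h'\rangle_{L^2_a} = 0$, which finishes the invariance check. The main place where care is needed is extending the Stokes identity from polynomials to general $f, h \in D$, but this is routine since every inner product and operator in the chain is continuous in the $D$-norm (and $\phi' \in C^\infty(\overline{\D}) \subset M(D)$).
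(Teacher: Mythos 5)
Your proof is correct and follows essentially the same route as the paper's: both reduce $\langle \phi\, Uf, Ug\rangle_{L^2_a}$ (for $f\in\HM$, $g\in\HM^\perp$) via the product rule for $U=(z\,\cdot\,)'$ and the boundary identity $z\phi'=\phi\sum_i P_{\lambda_i}$ on $\T$ to the vanishing of $\int_\T\sum_i P_{\lambda_i}\,\phi f\,\overline{g}\,\frac{|d\zeta|}{2\pi}$, which both then obtain from the local Dirichlet integral formula for $D(\phi F,\phi G)$ together with the reducing property of $\HM$ in $D$. The only difference is cosmetic: the paper runs the computation forward from the orthogonality relations $\langle\phi^{m+k}f,\phi^{m+j}g\rangle_D=0$ and isolates the term linear in $m$, while you unwind the target inner product backwards to the same boundary integral.
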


\begin{proof}
Let $f \in \HM, g \in \HM^\perp$. Then for any $k, j \in \N$, we have $\langle \phi^k f, \phi^jg\rangle_D = 0$. Thus for any $m >0$,
\begin{align*}
0 &= \langle \phi^{m+k}f, \phi^{m+j}g\rangle_D\\
& = \langle \phi^{m+k}f, \phi^{m+j}g\rangle_{H^2} + D(\phi^{m+k}f, \phi^{m+j}g)\\
& = \langle \phi^{k}f, \phi^{j}g\rangle_{H^2} + \int_\T m \sum_{i=1}^n P_{\lambda_i}(\zeta) \phi^{k}(\zeta)f(\zeta) \overline{\phi^{j}(\zeta)g(\zeta)} \frac{|d\zeta|}{2\pi} + D(\phi^{k}f,\phi^{j}g)\\
& = \langle \phi^{k}f, \phi^{j}g\rangle_{D} + \int_\T m \sum_{i=1}^n P_{\lambda_i}(\zeta) \phi^{k}(\zeta)f(\zeta) \overline{\phi^{j}(\zeta)g(\zeta)} \frac{|d\zeta|}{2\pi}\\
& = \int_\T m \sum_{i=1}^n P_{\lambda_i}(\zeta) \phi^{k}(\zeta)f(\zeta) \overline{\phi^{j}(\zeta)g(\zeta)} \frac{|d\zeta|}{2\pi},
\end{align*}
therefore
$$ 0 = \int_\T \sum_{i=1}^n P_{\lambda_i}(\zeta) \phi^{k}(\zeta)f(\zeta) \overline{\phi^{j}(\zeta)g(\zeta)} \frac{|d\zeta|}{2\pi} = \langle z\phi'\phi^kf, \phi\phi^jg\rangle_{H^2}.$$
This implies for any $l \geq 1$, $\langle z(\phi^l)'f, g\rangle_{H^2} = 0$. Note that
\begin{align*}
\langle z(\phi^l)'f, g\rangle_{H^2} &= \langle (z\phi^lf)' - (zf)'\phi^l, g\rangle_{H^2}\\
&= \langle f\phi^l,g\rangle_D - \langle(zf)'\phi^l, g\rangle_{H^2}\\
& = - \langle(zf)'\phi^l, (zg)'\rangle_{L^2_a},
\end{align*}
hence $\langle(zf)'\phi^l, (zg)'\rangle_{L^2_a} = 0$, and so $U\HM = (z\HM)'$ is $M_\phi$ invariant. Similarly, $U\HM^\perp = (z\HM^\perp)'$ is $M_\phi$ invariant. Therefore $U\HM = (z\HM)'$ is a reducing subspace of $M_\phi$ on $L^2_a$.
\end{proof}

Let $\{M_\phi\}' = \{T \in B(D): M_\phi T = T M_\phi\}$ be the commutant of $M_\phi$. Since there is a one to one correspondence between the reducing subspaces of $M_\phi$ and the projections in $\{M_\phi\}'$, the problem of classifying the reducing subspaces of $M_\phi$ is equivalent to finding the projections in $\{M_\phi\}'$. Let $\HA_\phi = \{M_\phi, M_\phi^*\}'$. Then $\HA_\phi$ is a von Neumann algebra. Zhu \cite{Zh00} conjectured that for a finite Blaschke product $\phi$ of order $n$, there are exactly $n$ distinct minimal reducing subspaces of $M_\phi$ on $L^2_a$. This is equivalent to the statement that $\HA_\phi$ has exactly $n$ minimal projections. Zhu's conjecture does not hold in general, and it is modified as follows: $M_\phi$ has at most $n$ distinct minimal reducing subspaces on $L^2_a$ (\cite{GH15}). This modified conjecture was proved in \cite{DPW12}, they showed that the von Neumann algebra $\HA_\phi$ in $B(L^2_a)$ is commutative of dimension $q$, where $q$ is the number of connected components of the Riemann surface $\phi^{-1}\circ \phi$. We have a similar result in the Dirichlet space.

\begin{thm}\label{cotovnaod}
Let $\phi$ be a finite Blaschke product of order $n$. If $\HM_1$ and $\HM_2$ are two distinct nontrivial minimal reducing subspaces of $M_\phi$ on $D$, then $\HM_1$ and $\HM_2$ are orthogonal.
\end{thm}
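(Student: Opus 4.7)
The plan is to transfer the question from $D$ to the Bergman space via the unitary $U: D \to L^2_a$, $Uf = (zf)'$, and then appeal to the commutativity of the von Neumann algebra $\HA_\phi \subset B(L^2_a)$ established by Douglas--Putinar--Wang \cite{DPW12}. By Theorem \ref{viorbdb}, both $U\HM_1 = (z\HM_1)'$ and $U\HM_2 = (z\HM_2)'$ are reducing subspaces of $M_\phi$ on $L^2_a$; and because $U$ is unitary, $U\HM_1 \perp U\HM_2$ in $L^2_a$ if and only if $\HM_1 \perp \HM_2$ in $D$. So it suffices to prove orthogonality on the Bergman side.

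First I would observe that $\HM_1 \cap \HM_2$ is itself a closed reducing subspace of $M_\phi$ on $D$ contained in each $\HM_i$. By the minimality and distinctness of $\HM_1$ and $\HM_2$, this intersection must be $\{0\}$: otherwise, minimality of either $\HM_i$ would force $\HM_1 \cap \HM_2 = \HM_1 = \HM_2$, contradicting $\HM_1 \neq \HM_2$. Since $U$ is a bijection, this gives $U\HM_1 \cap U\HM_2 = U(\HM_1 \cap \HM_2) = \{0\}$ as well.

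Let $P_i$ denote the orthogonal projection of $L^2_a$ onto $U\HM_i$. Because $U\HM_i$ reduces $M_\phi$, each $P_i$ lies in $\HA_\phi$, and by the Douglas--Putinar--Wang theorem $\HA_\phi$ is commutative on $L^2_a$, so $P_1 P_2 = P_2 P_1$. A standard computation then gives that $P_1 P_2$ is a self-adjoint idempotent whose range is precisely $U\HM_1 \cap U\HM_2 = \{0\}$. Hence $P_1 P_2 = 0$, i.e., $U\HM_1 \perp U\HM_2$, and the conclusion follows from the unitarity of $U$.

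The essential input is the Douglas--Putinar--Wang commutativity theorem on $L^2_a$; once that is combined with Theorem \ref{viorbdb}, the argument is purely formal projection calculus. A minor subtlety worth flagging is that $U\HM_1$ and $U\HM_2$ need not themselves be minimal reducing subspaces of $M_\phi$ on $L^2_a$ (they may decompose further on the Bergman side), but this is irrelevant, since the argument uses only commutativity of the projections in $\HA_\phi$ and not minimality of the transferred subspaces.
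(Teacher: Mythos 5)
Your proof is correct, but it reaches the conclusion by a genuinely different (and more economical) route than the paper. The paper argues by contradiction: if $\HM_1$ and $\HM_2$ are not orthogonal, then by a theorem of Guo and Huang \cite{GH11} there is a unitary $V:\HM_1\to\HM_2$ commuting with $M_\phi$, and the graphs $\{f+aVf: f\in\HM_1\}$ for $0<a<1$ form an uncountable family of pairwise distinct reducing subspaces of $M_\phi$ on $D$; pushing these to $L^2_a$ via Theorem \ref{viorbdb} produces uncountably many distinct projections in $\HA_\phi\subseteq B(L^2_a)$, contradicting the Douglas--Putinar--Wang result that this algebra is commutative of finite dimension $q\le n$ (hence contains only finitely many projections). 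You instead push only $\HM_1$ and $\HM_2$ themselves to $L^2_a$ and run a direct projection calculus: commutativity of $\HA_\phi$ makes $P_1P_2$ a self-adjoint idempotent whose range is $U\HM_1\cap U\HM_2=U(\HM_1\cap\HM_2)=\{0\}$, so $P_1P_2=0$ and orthogonality follows from the unitarity of $U$. Both arguments rest on the same two pillars --- Theorem \ref{viorbdb} and the Douglas--Putinar--Wang theorem --- but yours dispenses with the Guo--Huang unitary-equivalence input and uses only the commutativity of $\HA_\phi$, not its finite dimension, while the paper's version makes explicit the continuum of reducing subspaces that non-orthogonality would generate. Your closing caveat, that $U\HM_1$ and $U\HM_2$ need not be minimal on the Bergman side, is accurate and correctly identified as harmless to the argument.
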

\begin{proof}
Let $\HM_1$ and $\HM_2$ be two distinct nontrivial minimal reducing subspaces of $M_\phi$ on $D$. If $\HM_1$ and $\HM_2$ are not orthogonal, then $\HM_1$ is unitarily equivalent to $\HM_2$, i.e. there is a unitary operator $U$ from $\HM_1$ onto $\HM_2$ commuting with $M_\phi$ (\cite[Theorem3.3]{GH11}). Let $\HM = \HM_1 \cap \HM_2$, then $\HM$ is $\{0\}$, otherwise $\HM$ is a nontrivial reducing subspace of $M_\phi$ on $D$, hence $\HM = \HM_1 = \HM_2$ by the minimality of $\HM_1$ and $\HM_2$.

Now for each $0 < a <1$, set $\HM_1 = \{f + a U f: f \in \HM_1\}$. Then $\HM_a$ is a reducing subspace of $M_\phi$ on $D$. Moreover, if $0 < a < b <1$, then $\HM_a \neq \HM_b$. Let $\HN_a = (z\HM_a)'$. Then for each $0 < a <1$, $\HN_a$ is a reducing subspace of $M_\phi$ on $L^2_a$ and $\HN_a \neq \HN_b$ if $0 < a < b <1$. But this contradicts the fact that $\HA_\phi$ in $B(L^2_a)$ is commutative and has dimension $q \leq n$ (\cite{DPW12}). Thus $\HM_1$ and $\HM_2$ are orthogonal.
\end{proof}

It follows from the above theorem that the von Neumann algebra $\HA_\phi$ in $B(D)$ is commutative.

Now we prove Theorem \ref{prodortt} when $\phi$ has order $2$.
\begin{thm}\label{maintm1od2}
Let $\phi = \varphi_\alpha \varphi_\beta$ be a Blaschke product of order $2$. Then $M_\phi$ is reducible on $D$ if and only if $\phi$ is equivalent to $z^2$.
\end{thm}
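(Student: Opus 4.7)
The ``if'' direction is immediate: if $\phi$ is equivalent to $z^2$, then $M_\phi$ and $M_{z^2}$ share the same reducing subspaces, and by Stessin--Zhu \cite{SZ02}, $M_{z^2}$ admits $2^2-2=2$ proper reducing subspaces on $D$, so $M_\phi$ is reducible. For the ``only if'' direction my plan is to push any reducing subspace down to $L^2_a$ via the unitary $U$, exploit the rigidity of the reducing-subspace lattice of an order-two Blaschke product on the Bergman space, and then read off the conclusion from Theorem \ref{dgridtvb}.

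The first step is a reduction to the normalized case $\phi(0)=0$. Since $\varphi_{\phi(0)}\circ\phi$ is an order-two Blaschke product vanishing at the origin and equivalent to $\phi$, and since equivalent Blaschke products share reducing subspaces, I may replace $\phi$ by this composition and assume from the outset that $\phi = z\varphi_\beta$ for some $\beta\in\D$. The claim then becomes: if $M_\phi$ is reducible on $D$, then $\beta=0$.

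Now let $\HM$ be a nontrivial reducing subspace of $M_\phi$ on $D$. By Theorem \ref{rdsbdbs} the space $U\HM$ is a nontrivial reducing subspace of $M_\phi$ on $L^2_a$. For an order-two Blaschke product $\phi$ the Riemann surface of $\phi^{-1}\circ\phi$ splits into the diagonal together with the graph of the Möbius involution exchanging preimages, hence has exactly two connected components; by the Douglas--Putinar--Wang theorem cited just before Theorem \ref{cotovnaod}, the von Neumann algebra $\HA_\phi$ on $L^2_a$ is therefore two dimensional. Combined with Theorem \ref{ddrfbib} this forces the only nontrivial reducing subspaces of $M_\phi$ on $L^2_a$ to be $M_0(\phi)$ and $M_0(\phi)^\perp$. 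Thus $U\HM$ equals one of these two spaces, and since $\HM^\perp$ is also reducing and $U\HM^\perp = (U\HM)^\perp$, in either case $U^{-1}M_0(\phi)$ is a reducing subspace of $M_\phi$ on $D$.

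To finish, I compute $U^{-1}M_0(\phi)$ explicitly: because $\phi(0)=0$, the spanning vectors $\phi'\phi^j = \frac{1}{j+1}(\phi^{j+1})'$ of $M_0(\phi)$ lift under $U$ to $\phi^{j+1}/z$, so $U^{-1}M_0(\phi) = \text{span}\{\phi^{j+1}/z : j\geq 0\}$. Theorem \ref{dgridtvb} then forces $\beta=0$, i.e.\ $\phi = z^2$, and since the original $\phi$ is equivalent to the normalized one, transitivity of equivalence finishes the proof. The step I expect to be most delicate is the structural input on $L^2_a$, namely the classification of minimal reducing subspaces in the degree-two case, but this is precisely what the Douglas--Putinar--Wang theorem supplies, so modulo that citation the argument reduces to objects already under control in the excerpt.
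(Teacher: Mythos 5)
Your argument is correct and follows essentially the same route as the paper: transfer a reducing subspace to $L^2_a$ via $U$, use the classification of the two minimal reducing subspaces $M_0(\phi)$ and $M_0(\phi)^\perp$ on the Bergman space to identify $U^{-1}M_0(\phi)=\mathrm{span}\{\phi^{j+1}/z : j\geq 0\}$ as a reducing subspace on $D$, and invoke Theorem~\ref{dgridtvb}. The only cosmetic difference is that you normalize to $\phi(0)=0$ at the outset, whereas the paper performs the same normalization (via $\varphi_{\alpha\beta}\circ\phi$) in the middle of the argument.
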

\begin{proof}
As noted before, we only need to prove the "only if" part. Suppose $M_\phi$ is reducible on $D$. Let $\HM$ be a nontrivial reducing subspace of $M_\phi$ on $D$, then $(z\HM)'$ is a nontrivial reducing subspace of $M_\phi$ on $L^2_a$. Since $M_\phi$ has only two minimal reducing subspaces $M_0(\phi)$ and $M_0(\phi)^\perp$ on $L^2_a$. Without loss of generality, assume that $(z\HM)' = M_0(\phi)$.

Note that $\varphi_{\alpha\beta}(\phi) = az \varphi_\gamma$ for some $|a| =1, \gamma \in \D$. Let $\varphi = \varphi_{\alpha\beta}(\phi)$. Then $M_0(\varphi) = \text{span}\{\varphi'\varphi^j: j \geq 0\} \subseteq M_0(\phi)$, since $M_0(\varphi)$ is reducing for $M_\phi$ and $M_0(\phi)$ is minimal, we have $ M_0(\varphi) = M_0(\phi)$. Therefore $\HM = \text{span}\{\frac{\varphi^{j+1}}{z}: j = 0, 1, \cdots\}$. It follows from Theorem \ref{dgridtvb} that $\gamma = 0$, so $\phi$ is equivalent to $z^2$.
\end{proof}

Before we prove Theorem \ref{prodortt} when $\phi$ has order $3$, we need one more lemma.
\begin{lem}\label{pfnznnrd}
Given $\alpha \in \D \backslash \{0\}$, let $\phi = \varphi_\alpha^3$. Then $M_\phi$ is irreducible on $D$.
\end{lem}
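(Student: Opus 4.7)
The plan is to argue by contradiction, assuming $M_\phi$ has a nontrivial reducing subspace $\HM$ on $D$. Since $\phi = \varphi_\alpha^3$, the weighted composition unitary $f \mapsto (f\circ\varphi_\alpha)\varphi_\alpha'$ on $L^2_a$ conjugates $M_{z^3}$ to $M_\phi$, so $M_\phi$ has exactly three minimal reducing subspaces on $L^2_a$, namely
$$\HL_k = \text{span}\{\varphi_\alpha^{3j+k}\varphi_\alpha' : j \geq 0\},\qquad k = 0, 1, 2,$$
with $\HL_2 = M_0(\phi)$ (since $\phi^j\phi' = 3\varphi_\alpha^{3j+2}\varphi_\alpha'$). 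By Theorem \ref{viorbdb}, $U\HM = (z\HM)'$ is a proper nontrivial reducing subspace of $M_\phi$ on $L^2_a$, hence a direct sum of some $\HL_k$'s. Because every partition of $\{0,1,2\}$ into two non-empty index sets has a singleton side, after possibly replacing $\HM$ by $\HM^\perp$ I may assume $U\HM = \HL_k$ for a single $k\in\{0,1,2\}$.

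For $k\in\{0,1\}$, I will rule out $M_\phi$-invariance of $\HM$ by a direct computation. Rewriting $\HL_k = \text{span}\{(\varphi_\alpha^{3j+k+1})' : j \geq 0\}$ and inverting $U$ gives $\HM = \text{span}\{f_j : j \geq 0\}$ with $f_j = (\varphi_\alpha^{3j+k+1} - \alpha^{3j+k+1})/z$, and a short algebraic rearrangement yields
$$M_\phi f_j \;=\; f_{j+1} \;+\; \alpha^{3j+k+1}\,h,\qquad h := (\alpha^3 - \phi)/z \in D.$$
Since $\alpha \neq 0$ the coefficient is nonzero, so $M_\phi\HM \subseteq \HM$ forces $h \in \HM$, equivalently $Uh \in \HL_k$. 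However $Uh = (zh)' = -\phi' = -3\varphi_\alpha^2\varphi_\alpha' \in \HL_2$, and $\HL_2 \perp \HL_k$ for $k\neq 2$, yielding the desired contradiction.

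The remaining case $k = 2$ is more delicate, because the same identity now only confirms $M_\phi$-invariance of $\HM$. Here I will pass to the equivalent Blaschke product $\varphi = \varphi_{\alpha^3}\circ\phi$, which satisfies $\varphi(0) = 0$ and, after absorbing a unimodular constant (still an equivalence), factors as $\varphi = z\varphi_{\mu_1}\varphi_{\mu_2}$ with $\mu_i = \varphi_\alpha(\omega^i\alpha)$ for $\omega = e^{2\pi i/3}$; both $\mu_i$ are nonzero and distinct because $\alpha \neq 0$. Expanding
$$\varphi' \;=\; \frac{(|\alpha|^6 - 1)\phi'}{(1 - \overline{\alpha^3}\phi)^2} \;=\; (|\alpha|^6 - 1)\sum_{k\geq 0}(k+1)(\overline{\alpha^3})^k \phi^k\phi',$$
with $L^2_a$-convergence guaranteed by $|\alpha|<1$ and the norm formula $\|\phi^k\phi'\|^2 = 3/(k+1)$ from Remark \ref{rkodsrobn}, places $\varphi' \in M_0(\phi)$, whence $M_0(\varphi) = M_0(\phi) = \HL_2$. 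Consequently $\HM = U^{-1}\HL_2 = \text{span}\{\varphi^{j+1}/z : j \geq 0\}$, and Theorem \ref{dgridtvb} applied to $\varphi$ forces $\mu_1 = \mu_2 = 0$, contradicting $\alpha \neq 0$.

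I expect the main obstacle to be precisely this case $k = 2$: the ``$h$ lands in $\HL_2$'' computation that kills $k\in\{0,1\}$ becomes uninformative, so the contradiction must instead come from failure of $M_\phi^*$-invariance. Resolving it uses the ``move a zero to the origin'' trick (as in the proof of Theorem \ref{maintm1od2}), but requires the preliminary step of identifying the distinguished atom $M_0(\varphi)$ of the conjugated Blaschke product with $\HL_2 = M_0(\phi)$ rather than with one of the other minimal reducing subspaces, which is what the convergent series expansion above accomplishes.
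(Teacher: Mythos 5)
Your proof is correct and takes essentially the same route as the paper: transfer $\HM$ to $L^2_a$ via $U$, reduce to the three minimal reducing subspaces there, kill the two non-distinguished ones by multiplying a generator by $\phi$ and extracting $(\phi(0)-\phi)/z$, and kill the distinguished one by identifying $M_0(\varphi_{\alpha^3}\circ\phi)$ with $M_0(\phi)$ and invoking Theorem \ref{dgridtvb}. Your write-up actually supplies more detail than the paper (the series expansion of $\varphi'$ and the explicit nonzero zeros $\mu_i$) where the paper simply cites ``the same argument as in Theorem \ref{maintm1od2}.''
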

\begin{proof}
Let $U_\alpha: L^2_a \rightarrow L^2_a$ be defined by $U_\alpha f = f(\varphi_\alpha) q_\alpha$, where $q_\alpha(z) = \frac{1-|\alpha|^2}{(1-\overline{\alpha}z)^2}$ is the normalized reproducing kernel for $L^2_a$. Then $U_\alpha$ is a unitary operator and $U_\alpha^* M_\phi U_\alpha = M_{\phi \circ \varphi_\alpha} = M_{z^3}$.

Since $M_{z^3}$ has exact three nontrivial minimal reducing subspaces on $L^2_a$, we obtain that $M_\phi$ has exact three nontrivial minimal reducing subspaces on $L^2_a$ (\cite{GSZZ09}). These reducing subspaces are $M_0(\phi) = U_\alpha (M_0(z^3))$, $M_1 = U_\alpha [\text{span}\{1,z^3,z^6,\cdots\}] = \text{span}\{q_\alpha \phi^j: j \geq 0\}$ and $M_2 = U_\alpha [\text{span}\{z,z^4,z^7,\cdots\}] = \text{span}\{\varphi_\alpha q_\alpha \phi^j: j \geq 0\}$.

Let $\HM$ be a nontrivial reducing subspace of $M_\phi$ on $D$, then $(z\HM)'$ is a nontrivial reducing subspace of $M_\phi$ on $L^2_a$. There are essentially three cases: $(z\HM)' = M_0(\phi), M_1$ or $M_2$. By the same argument as in Theorem \ref{maintm1od2}, we see that $(z\HM)' \neq M_0(\phi)$.

If $(z\HM)' = M_1$, then
$$\HM = \text{span} \{\frac{\varphi_\alpha^{3j+1} - \varphi_\alpha^{3j+1}(0)}{z}: j = 0, 1, \cdots\}.$$
Since $\frac{\varphi_\alpha - \varphi_\alpha(0)}{z} \in \HM$, we have
$$\phi \frac{\varphi_\alpha - \varphi_\alpha(0)}{z} = \frac{\varphi_\alpha^4 - \varphi_\alpha^4(0)}{z} + \varphi_\alpha(0) \frac{\phi(0) - \phi}{z} \in \HM.$$
It follows that $\frac{\phi - \phi(0)}{z} \in \HM$, but $\frac{\phi - \phi(0)}{z} \in \HM^\perp$. This is a contradiction, therefore $(z\HM)' \neq M_1$.

If $(z\HM)' = M_2$, then
$$\HM = \text{span} \{\frac{\varphi_\alpha^{3j+2} - \varphi_\alpha^{3j+2}(0)}{z}: j = 0, 1, \cdots\}.$$
Similarly, we conclude that this is impossible. So $M_\phi$ is irreducible on $D$.
\end{proof}

Let $\phi$ be a finite Blaschke product of order $n$, then by Bochner's theorem (\cite{Wa18, Wa50}), $\phi'$ has exactly $n -1$ zeros in $\D$, i.e. $\phi'$ has $n -1$ critical points in $\D$.

Now we prove Theorem \ref{prodortt} when $\phi$ has order $3$.
\begin{thm}\label{maintm2od3}
Let $\phi = \varphi_{\alpha_1} \varphi_{\alpha_2} \varphi_{\alpha_3}$ be a Blaschke product of order $3$. Then $M_\phi$ is reducible on $D$ if and only if $\phi$ is equivalent to $z^3$.
\end{thm}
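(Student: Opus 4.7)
The ``if'' direction follows from Stessin--Zhu's theorem \cite{SZ02}. For the converse, since equivalent Blaschke products induce multiplication operators with the same reducing subspaces, I replace $\phi$ by the equivalent $\varphi_{\phi(0)}\circ\phi$ and thereafter assume $\phi(0)=0$, so that $\phi = a z \varphi_{\gamma_1}\varphi_{\gamma_2}$ with $|a|=1$ and $\gamma_1,\gamma_2\in\D$; the goal becomes showing $\gamma_1=\gamma_2=0$ under the assumption that $M_\phi$ admits a nontrivial reducing subspace $\HM$ on $D$. By Theorem \ref{viorbdb}, $U\HM=(z\HM)'$ is a nontrivial reducing subspace of $M_\phi$ on $L^2_a$, where $U\colon D\to L^2_a$ is the unitary $f\mapsto(zf)'$. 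Write $\HN=U^{-1}M_0(\phi)=\text{span}\{\phi^{j+1}/z:j\ge 0\}$ for the pullback of the distinguished reducing subspace.

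By \cite{DPW12}, the von Neumann algebra $\HA_\phi$ on $L^2_a$ is commutative, so the projections $P_{U\HM}$ and $P_{M_0(\phi)}$ commute. Their product is the projection onto $U\HM\cap M_0(\phi)$, which by minimality of $M_0(\phi)$ is either $0$ or $P_{M_0(\phi)}$; hence either $M_0(\phi)\perp U\HM$ or $M_0(\phi)\subseteq U\HM$. After replacing $\HM$ by $\HM^\perp$ if necessary, I assume $M_0(\phi)\subseteq U\HM$, i.e.\ $\HN\subseteq\HM$. If $\HN=\HM$, then $\HN$ is itself reducing on $D$, and Theorem \ref{dgridtvb} forces $\gamma_1=\gamma_2=0$, finishing the proof. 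Otherwise $\HN\subsetneq\HM$, and then $U\HM\ominus M_0(\phi)$ is a nontrivial reducing subspace of $M_\phi$ on $L^2_a$ sitting inside $M_0(\phi)^\perp$, so $M_\phi$ has at least three minimal reducing subspaces on $L^2_a$.

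By \cite{DPW12}, the number of minimal reducing subspaces of $M_\phi$ on $L^2_a$ equals the number of connected components of the Riemann surface $\phi^{-1}\circ\phi$; for an order $3$ Blaschke product this count equals $3$ precisely when the two critical points of $\phi$ (which exist by Bochner's theorem) coalesce into a single double critical point, in which case $\phi$ must be of the form $a\varphi_\lambda\circ\varphi_\alpha^3$ for some $\alpha\in\D$, $|a|=1$, and $\lambda\in\D$. If $\alpha=0$ then $\phi$ is equivalent to $z^3$, as desired. If $\alpha\ne 0$ then $\phi$ is equivalent to $\varphi_\alpha^3$, and Lemma \ref{pfnznnrd} together with the equivalence-invariance of reducing subspaces says $M_\phi$ is irreducible on $D$, contradicting the standing assumption. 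The principal obstacle is the classification step in this last paragraph: showing that an order-$3$ Blaschke product with three minimal reducing subspaces on $L^2_a$ is necessarily a M\"obius image of a cube. This rests on the component-count theorem of \cite{DPW12} combined with a Bochner-type critical-point analysis, and is noticeably more delicate than the lifting-and-pullback arguments that power the rest of the proof.
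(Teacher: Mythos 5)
Your proposal is correct and follows essentially the same route as the paper: push a reducing subspace to $L^2_a$ via $U$, use the Bergman-space classification of minimal reducing subspaces for order-$3$ Blaschke products (no multiple critical point gives only $M_0(\phi)$ and $M_0(\phi)^\perp$, while a double critical point forces $\phi$ equivalent to $\varphi_\alpha^3$), and then dispose of the two cases with Theorem \ref{dgridtvb} and Lemma \ref{pfnznnrd} respectively. Your organization (casing on whether $U\HM$ equals or properly contains $M_0(\phi)$, using commutativity of $\HA_\phi$) differs only cosmetically from the paper's casing on critical points, and the normalization $\phi(0)=0$ at the outset is a harmless streamlining.
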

\begin{proof}
As noted before, we only need to prove the "only if" part. Suppose $M_\phi$ is reducible on $D$. If $\phi$ has no multiple critical point in $\D$, then by \cite[Theorem 4]{GSZZ09}, $M_\phi$ has only two minimal reducing subspaces $M_0(\phi)$ and $M_0(\phi)^\perp$ on $L^2_a$. By the same argument as in Theorem \ref{maintm1od2}, we conclude that $\phi$ is equivalent to $z^3$.

If $\phi$ has a multiple critical point $c$ in $\D$, let $\lambda = \phi(c)$, then $\varphi_\lambda(\phi) = a \varphi_c^3$ for some $|a| = 1$. It follows from Lemma \ref{pfnznnrd} that $c = 0$, so $\phi$ is equivalent to $z^3$.
\end{proof}

\section{The case $n = 4$}
Let $\phi$ be a finite Blaschke product. Recall that if $\HM$ is a reducing subspace of $M_\phi$ on $D$, then $(z\HM)'$ is a reducing subspace of $M_\phi$ on $L^2_a$. Thus it is natural to ask the following question: if $\HM$ is a nontrivial minimal reducing subspace of $M_\phi$ on $D$, then is it true that $(z\HM)'$ is a nontrivial minimal reducing subspace of $M_\phi$ on $L^2_a$? We will see that this is not always the case.

We break the proof of Theorem \ref{scnfrsd} into a series of lemmas.

First let's look at an observation. Let $\phi$ be a finite Blaschke product of order $4$. Suppose $\phi = \psi_1 \circ \psi_2$ is decomposable, then $M_\phi$ has reducing subspaces $M_0(\phi), M_0(\psi_2) \ominus M_0(\phi)$ and $M_0(\psi_2)^\perp$ on $L^2_a$ (\cite[Theorem 2.1]{SZZ10}). Suppose $M_\phi$ is reducible on $D$ and let $\HM$ be a reducing subspace of $M_\phi$ on $D$. We wonder whether $(z\HM)'$ can be $M_0(\psi_2)$. If this is the case, then $\HM = \text{span} \{\frac{\psi_2^{j+1} - \psi_2^{j+1}(0)}{z}: j \geq 0\}$ and it is $M_\phi$ invariant. Suppose $\psi_2(z) = \varphi_{\alpha_3}(z) \varphi_{\alpha_4}(z), \alpha_3, \alpha_4 \in \D$, by a little calculation, we have
$$M_\phi^* \frac{\psi_2 - \psi_2(0)}{z} = \overline{\psi_1(0)}\frac{\psi_2 - \psi_2(0)}{z} + \overline{\psi'_1(0)}(\overline{\alpha_3}K_{\alpha_3} + \overline{\alpha_4}K_{\alpha_4}).$$
Thus if $\alpha_3 = \alpha_4 = 0$, then $M_\phi^* \frac{\psi_2 - \psi_2(0)}{z} \in \HM$. Similarly, we have $M_\phi^* \frac{\psi_2^{j+1} - \psi_2^{j+1}(0)}{z} \in \HM, j \geq 1$. Therefore if $\psi_2(z) = z^2$, then $\HM = \text{span} \{z, z^3, z^5, \cdots\}$ is a reducing subspace of $M_\phi$ on $D$. More generally, we have the following result.

\begin{lem}\label{rdcfbofd}
Let $\phi$ be a finite Blaschke product of order $4$. If $\phi = \psi_1 \circ \psi_2$ is decomposable and $\psi_2$ is equivalent to $z^2$, furthermore, if $\phi$ is not equivalent to $z^4$, then $M_\phi$ has exact two nontrivial minimal reducing subspaces on $D$.
\end{lem}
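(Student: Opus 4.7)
The plan is to normalize $\phi$, identify two reducing subspaces via parity, and prove minimality by transferring to $L^2_a$ and invoking Theorem \ref{dgridtvb}. Since equivalent Blaschke products have the same reducing subspaces, I first replace $\phi$ by $\varphi_{\phi(0)}\circ \phi$ so that $\phi(0)=0$, and then, using $\psi_2\sim z^2$, absorb the outer M\"obius factor into $\psi_1$ so that $\psi_2=z^2$. The resulting $\psi_1$ is a Blaschke product of order $2$ with $\psi_1(0)=0$, hence $\psi_1=\mu z\varphi_\gamma$ for some $|\mu|=1$ and $\gamma\in \D$; the hypothesis $\phi\not\sim z^4$ translates to $\psi_1\not\sim z^2$ and forces $\gamma\neq 0$. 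Consequently $\phi(z)=\mu z^2\varphi_\gamma(z^2)$ is an \emph{even} function of $z$.

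Set $\HM_e=\text{span}\{z^{2k}:k\ge 0\}$ and $\HM_o=\text{span}\{z^{2k+1}:k\ge 0\}$. These are mutually orthogonal in the $D$-norm and decompose $D=\HM_e\oplus \HM_o$. Since $\phi$ is even, $\phi\HM_e\subseteq \HM_e$ and $\phi\HM_o\subseteq \HM_o$, so both $\HM_e$ and $\HM_o$ are reducing subspaces of $M_\phi$ on $D$. It remains to show each is minimal and that these are the only two.

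The heart of the argument is minimality of $\HM_o$. Suppose $\HN\subsetneq \HM_o$ is a proper nontrivial reducing subspace. By Theorem \ref{viorbdb}, $U\HN$ is a nontrivial reducing subspace of $M_\phi$ on $L^2_a$ contained in $U\HM_o=M_0(\psi_2)$. By \cite[Theorem 2.1]{SZZ10} together with \cite{DPW12}, the hypothesis $\phi\not\sim z^4$ gives that $M_0(\phi)$, $M_0(\psi_2)\ominus M_0(\phi)$ and $M_0(\psi_2)^\perp$ are exactly the three minimal reducing subspaces of $M_\phi$ on $L^2_a$; so the only proper nontrivial reducing subspaces contained in $M_0(\psi_2)$ are $M_0(\phi)$ and $M_0(\psi_2)\ominus M_0(\phi)$. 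If $U\HN=M_0(\phi)$, then applying $U^{-1}$ to each $\phi'\phi^j$ (and using $\phi(0)=0$) yields
\[
\HN=\text{span}\Big\{\frac{\phi^{j+1}}{z}:j\ge 0\Big\}.
\]
Factoring $\phi=\mu z\cdot \varphi_0\cdot \varphi_{\sqrt{\gamma}}\cdot \varphi_{-\sqrt{\gamma}}$ (the unimodular $\mu$ does not affect reducing subspaces), Theorem \ref{dgridtvb} forces $0=\sqrt{\gamma}=-\sqrt{\gamma}$, i.e.\ $\gamma=0$, contradicting $\gamma\neq 0$. If instead $U\HN=M_0(\psi_2)\ominus M_0(\phi)$, then $\HM_o\ominus \HN=U^{-1}M_0(\phi)$, which is reducing as the intersection of the two reducing subspaces $\HM_o$ and $\HN^\perp$, returning us to the previous case.

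Minimality of $\HM_e$ is immediate: $U\HM_e=M_0(\psi_2)^\perp$ is itself one of the three minimal reducing subspaces of $M_\phi$ on $L^2_a$, so any reducing $\HN\subseteq \HM_e$ equals $\{0\}$ or $\HM_e$. Finally, by Theorem \ref{cotovnaod} any further minimal reducing subspace of $M_\phi$ on $D$ would be orthogonal to both $\HM_e$ and $\HM_o$, but $\HM_e\oplus \HM_o=D$ leaves no room. Hence $\HM_e$ and $\HM_o$ are exactly the two minimal reducing subspaces. The main obstacle is the two-case analysis for $\HM_o$: case (a) depends on the explicit identification of $U^{-1}M_0(\phi)$ and an application of Theorem \ref{dgridtvb}, while case (b) is reduced to case (a) by the observation that the intersection of two reducing subspaces is again reducing.
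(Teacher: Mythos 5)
Your proposal is correct and follows essentially the same route as the paper: normalize so that $\phi$ is an even function, identify the even/odd decomposition $\HM_e\oplus\HM_o=D$ as the candidate reducing subspaces, transfer to $L^2_a$ via $U$, invoke the classification of \cite[Theorem 2.1]{SZZ10}, and exclude $U\HN=M_0(\phi)$ by Theorem \ref{dgridtvb}. Your treatment is in fact slightly more careful than the paper's at one point, namely in explicitly disposing of the case $U\HN=M_0(\psi_2)\ominus M_0(\phi)$ by passing to $\HM_o\ominus\HN$, a step the paper's proof glosses over.
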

\begin{proof}
Suppose $\phi = \psi_1 \circ \psi_2$ is decomposable and $\psi_2$ is equivalent to $z^2$, then there are $\alpha \in \D, |a| = 1$ such that $\psi_2 = a\varphi_\alpha(z^2)$. Thus $\psi_1(\psi_2(z)) = \psi_1 (a\varphi_\alpha(z^2)) = b \widetilde{\psi_1} (z^2)$, where $b \widetilde{\psi_1} = \psi_1 (a\varphi_\alpha)$ is a Blaschke product of order $2$ and $|b| = 1$.

Note that $M_0(\psi_2) = M_0(z^2) = \text{span}\{z, z^3, \cdots\}$ in $L^2_a$. Let $\HM \subseteq D$ be such that $(z\HM)' = M_0(\psi_2)$, then $\HM = \text{span}\{z, z^3, z^5, \cdots\}$ in $D$. It is clear that $\HM$ is $M_\phi$ invariant, also $\HM^\perp = \text{span}\{1, z^2, z^4, \cdots\}$ in $D$ is $M_\phi$ invariant, thus $\HM$ is a nontrivial reducing subspace of $M_\phi$ on $D$.

Next we show that $\HM$ is a minimal reducing subspace. Let $0 \neq \HN \subseteq \HM$ be a reducing subspace of $M_\phi$ on $D$, then $0 \neq (z\HN)' \subseteq (z\HM)'$ is a reducing subspace of $M_\phi$ on $L^2_a$. Since $M_\phi$ has exact three minimal reducing subspaces $M_0(\phi), M_0(\psi_2) \ominus M_0(\phi)$ and $M_0(\psi_2)^\perp$ on $L^2_a$ (\cite[Theorem 2.1]{SZZ10}), it follows that $(z\HN)' = M_0(\phi)$. By the same argument as in Theorem \ref{maintm1od2}, we conclude that $\phi$ is equivalent to $z^4$, this contradicts the assumption that $\phi$ is not equivalent to $z^4$. So $\HM$ is a nontrivial minimal reducing subspace of $M_\phi$ on $D$. It also follows from \cite[Theorem 2.1]{SZZ10} that $\HM^\perp = \text{span}\{1, z^2, z^4, \cdots\}$ in $D$ is a nontrivial minimal reducing subspace of $M_\phi$ on $D$ and $M_\phi$ does not have any other minimal reducing subspaces on $D$. This completes the proof.
\end{proof}

\begin{rem}\label{minrddnb}
As in the above proof we have $\phi(z) = b \widetilde{\psi_1}(z^2)$. Suppose $\widetilde{\psi_1} = \varphi_{\alpha_1^2} \varphi_{\alpha_2^2}, \alpha_1, \alpha_2 \in \D \backslash \{0\}$, then
$$\phi(z) = b\varphi_{\alpha_1^2} (z^2) \varphi_{\alpha_2^2}(z^2) = b \varphi_{\alpha_1}(z) \varphi_{-\alpha_1}(z) \varphi_{\alpha_2}(z) \varphi_{-\alpha_2}(z).$$
If $\alpha_1 \neq \alpha_2$ and $\alpha_1 \neq -\alpha_2$, then $\ker M_\phi^* \cap \HM = \text{span} \{K_{\alpha_1} - K_{-\alpha_1}, K_{\alpha_2} - K_{-\alpha_2}\}$ is two dimensional, and $\ker M_\phi^* \cap \HM^\perp$ is also two dimensional. If $\alpha_1 = \alpha_2$ or $\alpha_1 = -\alpha_2$, then $\ker M_\phi^* \cap \HM = \text{span} \{K_{\alpha_1} - K_{-\alpha_1}, \frac{1}{1-\overline{\alpha_1}z} - \frac{1}{1+\overline{\alpha_1}z}\}$ is two dimensional, and $\ker M_\phi^* \cap \HM^\perp$ is also two dimensional. Thus $M_\phi$ has two nontrivial minimal reducing subspaces $\HM$ and $\HM^\perp$ on $D$, and they satisfy that $\dim (\HM \ominus \phi\HM) = \dim (\HM^\perp \ominus \phi\HM^\perp) = 2$, and $(z\HM)'$ is not a minimal reducing subspaces of $M_\phi$ on $L^2_a$.
\end{rem}

\begin{rem}\label{rdswoninpod}
Suppose $n$ is not a prime number, then $n = pq$ for some $p, q > 1$. Let $\psi_1$ be a Blaschke product of order $p$ and let $\phi = \psi_1(z^q)$. Let
$$\HM_j = \text{span}\{z^j, z^{q+j}, z^{2q+j}, \cdots\}, \quad j =0, 1, \cdots, q-1.$$
Then each $M_j$ is a reducing subspace of $M_\phi$ on $D$. Thus $M_\phi$ is reducible on $D$ when $\phi = \psi_1(z^q), q >1$.
\end{rem}

\begin{lem}\label{nrfdczpess}
Given $\gamma \in \D \backslash \{0\}$, let $\psi = z \varphi_\gamma$ and $\phi = \psi^2$. Then $M_\phi$ has exact two nontrivial minimal reducing subspaces on $D$.
\end{lem}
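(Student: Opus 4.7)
The plan is to exhibit two orthogonal minimal reducing subspaces of $M_\phi$ on $D$ explicitly, and then use the Bergman-space picture from \cite{SZZ10} together with Theorem~\ref{viorbdb} to show that no others exist. Because $\phi=\psi^2$ is decomposable, on $L^2_a$ the operator $M_\phi$ has exactly three minimal reducing subspaces $\HE_1 = M_0(\phi)$, $\HE_2 = M_0(\psi)\ominus M_0(\phi)$ and $\HE_3 = M_0(\psi)^\perp$, so by Theorem~\ref{viorbdb} every reducing subspace $\HM$ of $M_\phi$ on $D$ has $U\HM$ equal to a direct sum of some of $\HE_1,\HE_2,\HE_3$.

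I would first verify that $\varphi_\gamma\in\ker M_\phi^*$. Since $\phi$ has double zeros at $0$ and $\gamma$, the space $\ker M_\phi^*$ is four-dimensional and spanned by $\{1,\,z,\,K_\gamma,\,\partial_{\bar w}K_w|_{w=\gamma}\}$; a direct Taylor-coefficient comparison yields
\[
\varphi_\gamma \;=\; \frac{1}{\bar\gamma} \;-\; \frac{1-|\gamma|^2}{\bar\gamma}\,K_\gamma \;-\; (1-|\gamma|^2)\,\partial_{\bar w}K_w\big|_{w=\gamma},
\]
so $M_\phi^*\varphi_\gamma=0$. Define
\[
\HM_1 \;:=\; \overline{\text{span}}\{\phi^k\varphi_\gamma:k\ge 0\} \;=\; \overline{\text{span}}\{\psi^{2k+1}/z:k\ge 0\}.
\]
Applying $U$ gives $U(\phi^k\varphi_\gamma)=(2k+1)\psi^{2k}\psi'$; since $\{\psi^j\psi'\}_j$ is an orthogonal family in $M_0(\psi)\subset L^2_a$ by Remark~\ref{rkodsrobn}, the basis of $\HM_1$ is orthogonal in $D$ and $U\HM_1=\HE_2$. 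Forward invariance $M_\phi\HM_1\subseteq\HM_1$ is immediate. The core step is the identity $M_\phi^*(\phi\varphi_\gamma)=3\,\varphi_\gamma$, which I would establish by applying the formula $\langle\phi f,\phi g\rangle_D = \langle f,g\rangle_D + \int_\T\sum_{i=1}^4 P_{\lambda_i}\,f\bar g\,d\sigma$ (with $\lambda_1=\lambda_2=0$, $\lambda_3=\lambda_4=\gamma$) to $f=\varphi_\gamma$ and an arbitrary test $g=h$, and evaluating $\int_\T P_\gamma\varphi_\gamma\bar h\,d\sigma$ via the Szeg\H o-kernel decomposition $P_\gamma=k_\gamma+\overline{k_\gamma}-1$; the cancellation $\gamma-|\gamma|^2/\bar\gamma=0$ together with $\varphi_\gamma(\gamma)=0$ annihilates the $\overline{h(\gamma)}$-term and leaves $3\langle\varphi_\gamma,h\rangle_D$ on the right. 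An analogous Poisson-integral computation for each higher $k$ yields $M_\phi^*(\phi^k\varphi_\gamma)=\tfrac{2k+1}{2k-1}\phi^{k-1}\varphi_\gamma$, confirming that $\HM_1$ is reducing with $\dim(\HM_1\ominus\phi\HM_1)=1$.

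Minimality of $\HM_1$ then follows because any nontrivial reducing subspace $\HN\subsetneq\HM_1$ would give $U\HN\subsetneq\HE_2$, impossible as $\HE_2$ is minimal on $L^2_a$. The complement $\HM_2:=\HM_1^\perp$ is automatically reducing with $U\HM_2=\HE_1\oplus\HE_3$; to see it is minimal, suppose $\HN\subsetneq\HM_2$ is nontrivial and reducing, so $U\HN\in\{\HE_1,\HE_3\}$. The case $U\HN=\HE_1=M_0(\phi)$ forces $\HN = \overline{\text{span}}\{\phi^{j+1}/z:j\ge 0\}$, contradicting Theorem~\ref{dgridtvb} because the non-$z$ zeros of $\phi$ include $\gamma\neq 0$ (exactly the argument used in the proof of Theorem~\ref{maintm1od2}). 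The case $U\HN=\HE_3$ would make $\HN^\perp$ reducing with $U\HN^\perp=\HE_1\oplus\HE_2$, and then $\HN^\perp\cap\HM_2 = U^{-1}(\HE_1)$ would itself be a reducing subspace of $D$ with $U$-image $\HE_1$, the same contradiction. Finally, Theorem~\ref{cotovnaod} combined with $D=\HM_1\oplus\HM_2$ precludes any third minimal reducing subspace. The main technical obstacle is the Poisson-integral calculation behind $M_\phi^*(\phi\varphi_\gamma)=3\varphi_\gamma$ and its iterates: the delicate cancellation of the $\overline{h(\gamma)}$-coefficient is precisely what keeps $M_\phi^*(\phi^k\varphi_\gamma)$ inside $\HM_1$ instead of picking up a $K_\gamma$-component that would lie outside.
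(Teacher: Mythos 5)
Your proposal is correct and follows essentially the same route as the paper: the same two candidate subspaces (your $\HM_1=\overline{\text{span}}\{\phi^k\varphi_\gamma\}=\text{span}\{\psi^{2k+1}/z\}$ is exactly the paper's $\HM$ with $(z\HM)'=M_0(\psi)\ominus M_0(\phi)$), the same key adjoint identity $M_\phi^*(\psi^{2k+1}/z)=\tfrac{2k+1}{2k-1}\psi^{2k-1}/z$, and the same exclusion of any reducing subspace mapping onto $M_0(\phi)$ via Theorem~\ref{dgridtvb} together with the $L^2_a$-classification from \cite{SZZ10}. The only differences are cosmetic (verifying $\varphi_\gamma\in\ker M_\phi^*$ through the kernel-function basis, and invoking Theorem~\ref{cotovnaod} to cap the count), so no changes are needed.
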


\begin{proof}
By \cite[Theorem 2.1]{SZZ10}, $M_\phi$ has exact three minimal reducing subspaces on $L^2_a$, $M_0(\phi), M_0(\psi) \ominus M_0(\phi)$ and $M_0(\psi)^\perp$.

Let $\HM \subseteq D$ be such that $(z\HM)' = M_0(\psi) \ominus M_0(\phi)$. Note that
$$M_0(\psi) = \text{span}\{\psi'\psi^j: j\geq 0\},$$
$$M_0(\phi) = \text{span}\{\phi'\phi^j: j\geq 0\} = \text{span}\{\psi'\psi^{2j+1}: j\geq 0\},$$
thus $M_0(\psi) \ominus M_0(\phi) = \text{span} \{\psi'\psi^{2j}: j\geq 0\}$, and so $\HM = \text{span} \{\frac{\psi^{2j+1}}{z}: j \geq 0\}$. It is clear that $\HM$ is $M_\phi$ invariant.

Now we show that $\HM$ is $M_\phi^*$ invariant. For any $k \geq 0$,
\begin{align*}
\langle M_\phi^* \frac{\psi}{z}, z^k\rangle_D& = \langle \frac{\psi}{z}, \phi z^k\rangle_D = \langle \psi', \psi^2 z^k\rangle_{H^2}= \langle z\psi', \psi^2 z^{k+1}\rangle_{H^2}\\
& = \int_\T [1+P_\gamma(\zeta)] \overline{\psi(\zeta)\zeta^{k+1}} \frac{|d\zeta|}{2\pi}\\
&= \overline{\psi(\gamma)\gamma^{k+1}} = 0,
\end{align*}
hence $M_\phi^* \frac{\psi}{z} = 0$.

For any $j \geq 1, k \geq 0$,
\begin{align*}
\langle M_\phi^* \frac{\psi^{2j+1}}{z}, z^k\rangle_D& = \langle \frac{\psi^{2j+1}}{z}, \phi z^k\rangle_D = \langle(2j+1)\psi^{2j} \psi', \psi^2 z^k\rangle_{H^2}\\
&= \langle (2j+1)\psi^{2j-2} \psi', z^{k}\rangle_{H^2}\\
& = \frac{2j+1}{2j-1} \widehat{(\psi^{2j-1})'}(k),
\end{align*}
therefore
$$M_\phi^* \frac{\psi^{2j+1}}{z} = \frac{2j+1}{2j-1} \sum_{k=0}^\infty \widehat{(\psi^{2j-1})'}(k) \frac{1}{k+1} z^k = \frac{2j+1}{2j-1} \frac{\psi^{2j-1}}{z}.$$
It follows that $\HM$ is $M_\phi^*$ invariant and $\dim \HM \ominus \phi\HM =1$. So $\HM$ is a nontrivial minimal reducing subspace of $M_\phi$ on $D$. Then $\HM^\perp$ is also a reducing subspace of $M_\phi$ on $D$, and $(z\HM^\perp)' = M_0(\phi) \oplus M_0(\psi)^\perp$. Since $M_\phi$ is not equivalent to $z^4$, there is no reducing subspace $\HN$ of $M_\phi$ on $D$ such that $(z\HN)' = M_0(\phi)$, we conclude that $\HM^\perp$ is a nontrivial minimal reducing subspace of $M_\phi$ on $D$ and it satisfies $\dim \HM^\perp \ominus \phi\HM^\perp =3$. This finishes the proof.
\end{proof}

The following theorem is of independent interest.
\begin{thm}\label{nripizf}
Given $\alpha \in \D \backslash \{0\}$, let $\phi = \varphi_\alpha^4$. Then $M_\phi$ is irreducible on $D$.
\end{thm}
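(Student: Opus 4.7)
The plan is to reduce the question on $D$ to a combinatorial check using the unitary identification of $M_\phi$ with $M_{z^4}$ on $L^2_a$. Recall that the unitary $U_\alpha:L^2_a\to L^2_a$ defined by $U_\alpha f = f(\varphi_\alpha)q_\alpha$ satisfies $U_\alpha^* M_\phi U_\alpha = M_{z^4}$, so every reducing subspace of $M_\phi$ on $L^2_a$ has the form $\bigoplus_{j\in T} N_j$ for some $T\subseteq\{0,1,2,3\}$, where
\[
N_j \;:=\; U_\alpha\bigl(\text{span}\{z^{4k+j}:k\geq 0\}\bigr) \;=\; \text{span}\{\varphi_\alpha^j q_\alpha \phi^k:k\geq 0\}.
\]
By Theorem \ref{viorbdb}, any reducing subspace $\HM$ of $M_\phi$ on $D$ gives $U\HM=\bigoplus_{j\in T}N_j$ for some such $T$, so $\HM=\HM_T:=U^{-1}\bigl(\bigoplus_{j\in T}N_j\bigr)$. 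It therefore suffices to rule out every non-empty proper $T$.

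Next I would compute $U^{-1}$ of each generator. Via the substitution $u=\varphi_\alpha(w)$, which turns $q_\alpha(w)\,dw$ into $-du$ and $\phi(w)$ into $u^4$, a direct integration yields
\[
U^{-1}(\varphi_\alpha^j q_\alpha \phi^k)(z) \;=\; g_m(z), \qquad g_m(z):=\frac{\alpha^m-\varphi_\alpha(z)^m}{mz},
\]
with $m=j+4k+1$. Thus $\HM_T = \text{span}\{g_m:m\geq 1,\ (m-1)\bmod 4 \in T\}$, and the family $\{\HM_T\}_{T\subseteq\{0,1,2,3\}}$ is an orthogonal decomposition of $D$. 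The central computation is then the identity
\[
\phi\, g_m \;=\; \frac{m+4}{m}\, g_{m+4} \;-\; \frac{4\alpha^m}{m}\, g_4,
\]
obtained by writing $\alpha^m\varphi_\alpha^4 = \alpha^{m+4} - \alpha^m(\alpha^4-\varphi_\alpha^4)$ and dividing by $mz$. Since $g_4$ corresponds to $(j,k)=(3,0)$, we have $g_4\in U^{-1}(N_3)$, which is orthogonal to $\HM_T$ whenever $3\notin T$.

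Hence for any $m$ with $(m-1)\bmod 4\in T$, the component of $\phi g_m$ orthogonal to $\HM_T$ is $-(4\alpha^m/m)g_4$, which vanishes (since $\alpha\neq 0$) only when $3\in T$. Thus $M_\phi$-invariance of $\HM_T$ forces $3\in T$ or $T=\emptyset$. Applying the same criterion to $\HM_T^\perp = \HM_{T^c}$, which must also be $M_\phi$-invariant whenever $\HM_T$ reduces $M_\phi$, forces $3\in T^c$ or $T^c=\emptyset$. Since $3$ cannot belong to both $T$ and $T^c$, one of them is empty, i.e.\ $\HM_T$ is trivial. Therefore $M_\phi$ has no non-trivial reducing subspace on $D$, proving irreducibility. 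The main obstacle is the clean execution of the substitution $u=\varphi_\alpha(w)$ to obtain the explicit formula for $g_m$; once that is in hand, the appearance of the single ``obstruction'' $g_4$ in every $\phi g_m$ delivers the contradiction uniformly across all non-trivial $T$, bypassing a subset-by-subset case analysis.
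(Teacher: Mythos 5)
Your proof is correct, and while it shares the paper's overall framework---transferring a reducing subspace $\HM \subseteq D$ to the reducing subspace $U\HM \subseteq L^2_a$ via Theorem \ref{viorbdb} and using the fact that every reducing subspace of $M_\phi$ on $L^2_a$ is a direct sum of the four minimal pieces $N_j = U_\alpha(\text{span}\{z^{4k+j}: k \geq 0\})$---the exclusion step is genuinely different. The paper proceeds by cases: single minimal pieces are ruled out by the argument of Lemma \ref{pfnznnrd}, the pairs $M_0(\phi)\oplus M_i$ are ruled out by the adjoint computation $M_\phi^*\left[\frac{\phi-\phi(0)}{z}\right] = 4\overline{\alpha}K_\alpha$ followed by conjugation with $U_\alpha$ to show $\frac{1}{1-\overline{\alpha}z}$ cannot lie in the relevant subspace, and the remaining sums are handled by passing to complements. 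Your single identity $\phi g_m = \frac{m+4}{m}g_{m+4} - \frac{4\alpha^m}{m}g_4$, which checks out (it is the $n=4$ version of the algebraic trick already used for $n=3$ inside Lemma \ref{pfnznnrd}, and the formula $U^{-1}(\varphi_\alpha^{m-1}q_\alpha) = g_m$ follows from $\varphi_\alpha' = -q_\alpha$), exhibits the one obstruction $g_4 \in U^{-1}(N_3)$ in $\phi g_m$ for \emph{every} generator, so forward invariance of $\HM_T$ alone forces $3 \in T$, and applying this to both $T$ and $T^c$ disposes of all fourteen nontrivial candidates simultaneously. What your route buys is a case-free argument with no adjoint computations; what it requires in exchange is an explicit appeal to the fact that the reducing-subspace lattice of $M_{z^4}$ on $L^2_a$ is exactly the Boolean algebra generated by the four minimal pieces (equivalently, that the von Neumann algebra $\{M_{z^4}, M_{z^4}^*\}'$ on $L^2_a$ is abelian), a fact the paper also relies on and which you should cite from \cite{SZ02} or \cite{DPW12}.
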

\begin{proof}
Suppose $M_\phi$ is reducible on $D$ and $\HM$ is a nontrivial reducing subspace of $M_\phi$ on $D$, then $(z\HM)'$ is a reducing subspace of $M_\phi$ on $L^2_a$. Note that $M_\phi$ has exact four minimal reducing subspaces on $L^2_a$, $M_0(\phi), M_1 = \text{span}\{q_\alpha \phi^j: j \geq 0\}, M_2 = \text{span}\{q_\alpha\varphi_\alpha \phi^j: j \geq 0\}$ and $M_3 = \text{span}\{q_\alpha \varphi_\alpha^2 \phi^j: j \geq 0\}$. By the same argument as in Lemma \ref{pfnznnrd}, we see that $(z\HM)' \neq M_i, 0 \leq i \leq 3$.

Suppose $(z\HM)' = M_0(\phi) \oplus M_1 = \text{span}\{q_\alpha \varphi_\alpha^3 \phi^j, q_\alpha \phi^j: j \geq 0\}$ in $L^2_a$, then $\HM = \text{span}\{\frac{\phi^{j+1} - \phi^{j+1}(0)}{z}, \frac{\varphi_\alpha^{4j+1} - \varphi_\alpha^{4j+1}(0)}{z}: j \geq 0\}$. It can be verified that $M_\phi^* \left[\frac{\phi - \phi(0)}{z}\right] = 4 \overline{\alpha} K_\alpha$. Then $(zK_\alpha)' = \frac{1}{1-\overline{\alpha}z} \in M_0(\phi) \oplus M_1$. Recall that $U_\alpha: L^2_a \rightarrow L^2_a, U_\alpha f = f\circ \varphi_\alpha q_\alpha$ is unitary, it follows that
\begin{align*}
U_\alpha \frac{1}{1-\overline{\alpha}z} = \frac{1}{1-\overline{\alpha}z} &\in U_\alpha \left[M_0(\phi) \oplus M_1\right]\\
 &= \text{span} \{1, z^3, z^4, z^7, z^8, \cdots\}.
\end{align*}
This is impossible, thus $(z\HM)' \neq M_0(\phi) \oplus M_1$. Similarly, $(z\HM)' \neq M_0(\phi) \oplus M_2$ and $(z\HM)' \neq M_0(\phi) \oplus M_3$. Since $M_0(\phi), M_1, M_2$ and $M_3$ are the exact four minimal reducing subspaces of $M_\phi$ on $L^2_a$, we conclude that $M_\phi$ is irreducible on $D$.
\end{proof}

\begin{rem}\label{irdnznpc}
Following the same argument as above, we conclude that if $\phi = \varphi_\alpha^n$ for some $n>1, \alpha \in \D \backslash \{0\}$, then $M_\phi$ is irreducible on $D$.
\end{rem}

\begin{lem}\label{nrfdczes}
Let $\phi$ be a finite Blaschke product of order $4$. Suppose $\phi = \psi_1\circ \psi_2$ is decomposable. If $\psi_2$ is not equivalent to $z^2$ and $\phi$ is not equivalent to $(z\varphi_\gamma)^2$ for any $\gamma \in \D \backslash \{0\}$, then $M_\phi$ is irreducible on $D$.
\end{lem}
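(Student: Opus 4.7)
The plan is to imitate the case-by-case strategy of Theorem~\ref{nripizf}. Suppose toward a contradiction that $\HM \subset D$ is a nontrivial reducing subspace of $M_\phi$. By Theorem~\ref{rdsbdbs}, $(z\HM)'$ is a reducing subspace of $M_\phi$ on $L^2_a$, and by \cite[Theorem 2.1]{SZZ10} the minimal reducing subspaces of $M_\phi$ on $L^2_a$ are $L_1 = M_0(\phi)$, $L_2 = M_0(\psi_2)\ominus M_0(\phi)$, and $L_3 = M_0(\psi_2)^\perp$, so $(z\HM)'$ is one of six nontrivial direct sums of these. Since $\HM^\perp$ is also reducing, orthogonal complementation reduces matters to the three primary cases $(z\HM)' = L_1$, $L_1\oplus L_2$, and $L_2$.

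Before the casework I would normalize. Reducibility of $M_\phi$ is preserved under equivalence of $\phi$, and the internal adjustment $\psi_1\circ\psi_2 = (\psi_1\circ\varphi_{\psi_2(0)})\circ(\varphi_{\psi_2(0)}\circ\psi_2)$ does not change $\phi$, so we may assume $\psi_1(0) = \psi_2(0) = 0$; then $\psi_1 = z\varphi_\nu$ and $\psi_2 = z\varphi_\gamma$. The hypothesis $\psi_2 \not\sim z^2$ forces $\gamma \neq 0$, and the hypothesis $\phi \not\sim (z\varphi_\delta)^2$ forces $\nu \neq 0$ (otherwise $\phi = \psi_2^2 = (z\varphi_\gamma)^2$). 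In particular $\phi(0) = 0$ and $\gamma$ is a nonzero zero of $\phi$.

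The two easier primary cases I would dispose of as follows. For $(z\HM)' = L_1$: since $\phi(0)=0$, $\HM = \overline{\text{span}}\{\phi^{j+1}/z : j \geq 0\}$, and Theorem~\ref{dgridtvb} forces every nonzero zero of $\phi$ to be $0$, contradicting $\phi(\gamma) = 0$. For $(z\HM)' = L_1 \oplus L_2 = M_0(\psi_2)$: $\HM = \overline{\text{span}}\{\psi_2^{j+1}/z : j \geq 0\}$, which is automatically $M_\phi$-invariant, and a direct Taylor-coefficient calculation parallel to Theorem~\ref{dgridtvb} yields
\begin{equation*}
M_\phi^*(\psi_2/z) \;=\; -(1-|\gamma|^2)\,\overline{\phi'(\gamma)}\,K_\gamma,
\end{equation*}
which is a nonzero multiple of $K_\gamma$ because $\phi'(\gamma) = \psi_1'(0)\psi_2'(\gamma) = -\nu\gamma/(1-|\gamma|^2) \neq 0$. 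Every element of $\HM$ vanishes at $\gamma$ while $K_\gamma(\gamma) \neq 0$, so $K_\gamma \notin \HM$, contradicting $M_\phi^*$-invariance.

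The main obstacle is the case $(z\HM)' = L_2$. Using the intertwining isometry $V_{\psi_2}: M_0(\psi_2) \to L^2_a$ with $V_{\psi_2}^* f = f(\psi_2)\psi_2'/\sqrt{2}$, together with Remark~\ref{rkodsrobn} applied to $\psi_1 = z\varphi_\nu$, one finds $L_2 = \overline{\text{span}}\{\phi^j\psi_2'/(1-\bar\nu\psi_2) : j \geq 0\}$; hence $\HM = U^{-1}(L_2)$ is generated by $f_0 = -\log(1-\bar\nu\psi_2)/(\bar\nu z) = \varphi_\gamma \cdot (K_\nu \circ \psi_2)$ and its $M_\phi$-iterates. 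A Cauchy-integral argument (substituting $u = \psi_2(w)$, under which the path $0 \to \gamma$ becomes a closed loop $0 \to 0$ in $\D$ on which the integrand $\psi_1(u)^j/(1-\bar\nu u)$ is holomorphic) shows every such generator vanishes at $\gamma$, so $\HM \perp K_\gamma$. On the other hand, using $(zK_\gamma)' = 1/(1-\bar\gamma z)$ one computes
\begin{equation*}
(z\phi K_\gamma)' \;=\; \phi/(1-\bar\gamma z) + z\phi'\,K_\gamma,
\end{equation*}
and since $\phi/(1-\bar\gamma z) \in L_3 \perp L_2 \ni g_0$ with $g_0 = \psi_2'/(1-\bar\nu\psi_2) = Uf_0$, we have $(M_\phi^* f_0)(\gamma) = \langle f_0, \phi K_\gamma\rangle_D = \langle g_0, z\phi' K_\gamma\rangle_{L^2_a}$. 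The technical heart of the argument is to show this last inner product is nonzero whenever $\nu \neq 0$; it vanishes when $\psi_1 = z^2$, reflecting the reducibility exhibited in Lemma~\ref{nrfdczpess}. Once established, $(M_\phi^* f_0)(\gamma) \neq 0$ contradicts $M_\phi^* f_0 \in \HM$, completing the case and the proof.
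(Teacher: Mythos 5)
Your overall architecture is sound and matches the paper's: pass to $L^2_a$ via $U$, invoke \cite[Theorem 2.1]{SZZ10} to list the candidates for $(z\HM)'$, normalize so that $\psi_1=z\varphi_\nu$, $\psi_2=z\varphi_\gamma$ with $\gamma\neq0$, $\nu\neq0$, and rule out the cases. Your first two cases are correct: the exclusion of $M_0(\phi)$ via Theorem \ref{dgridtvb} is exactly the paper's argument, and your formula $M_\phi^*(\psi_2/z)=\overline{\nu\gamma}K_\gamma$ agrees with the paper's displayed computation of $M_\phi^*\frac{\psi_2-\psi_2(0)}{z}$ preceding Lemma \ref{rdcfbofd}, so the case $(z\HM)'=M_0(\psi_2)$ is correctly dispatched (the paper reaches the same conclusion by pairing $\psi/z$ against $\varphi K_\gamma$ instead).

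The problem is the third case, which is the entire content of the lemma (it is precisely the case that fails to produce a contradiction when $\phi=(z\varphi_\gamma)^2$, cf.\ Lemma \ref{nrfdczpess}). Your reduction of it to the single claim $\langle g_0, z\phi'K_\gamma\rangle_{L^2_a}\neq 0$, with $g_0=\psi_2'/(1-\overline{\nu}\psi_2)$, is legitimate as far as it goes — the identification of $L_2$, the Cauchy-integral argument showing $\HM\perp K_\gamma$, and the splitting $(z\phi K_\gamma)'=\phi/(1-\overline{\gamma}z)+z\phi'K_\gamma$ with the first summand in $L_3$ all check out. But you then declare the nonvanishing to be "the technical heart" and do not prove it; this is not a routine verification (expanding $g_0=\sum_m\overline{\nu}^m\psi_2^m\psi_2'$ leaves an infinite series of Bergman pairings with no evident closed form), and if this particular value happened to vanish you would need an entirely different obstruction, since nothing else in your argument rules out $M_\phi^*$-invariance of $U^{-1}(L_2)$. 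The paper sidesteps this by working with the complementary subspace $M_0(\phi)\oplus M_0(\psi_2)^\perp$ instead of $L_2$: writing $\varphi_\lambda\circ\phi=cz\varphi_\gamma\varphi_{\beta_1}\varphi_{\beta_2}$, it computes $M_\varphi^*(\varphi/z)=\overline{\gamma}K_\gamma+\overline{\beta_1}K_{\beta_1}+\overline{\beta_2}K_{\beta_2}$, uses $K_\gamma\in\HM_2$ and the explicit description $M_0(\psi_2)^\perp=\text{span}\{\psi^m/(1-\overline{\gamma}z)\}$ to place $\overline{\beta_1}K_{\beta_1}+\overline{\beta_2}K_{\beta_2}$ in $\HM_2$, and then a single finite computation $\langle g\psi,\varphi'\rangle_{L^2_a}=-2a\overline{c}\,\overline{\beta_1\beta_2}=0$ forces $\beta_1\beta_2=0$, hence $\nu=0$ — a contradiction. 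You should either carry out your inner-product computation in full or switch to the complementary subspace as the paper does.
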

\begin{proof}
Suppose $\psi_2 = \varphi_{\alpha_3}\varphi_{\alpha_4}, \alpha_3, \alpha_4 \in \D$, then $\alpha_3 \neq - \alpha_4$. Assume that $\varphi_{\alpha_3\alpha_4}(\psi_2) = az\varphi_\gamma$ for some $\gamma \in \D \backslash \{0\}$ and $|a| = 1$, then $\psi_1 \circ \psi_2 = \psi_1 \circ \varphi_{\alpha_3\alpha_4}(az\varphi_\gamma) = \widetilde{\psi_1}(az\varphi_\gamma)$, where $\widetilde{\psi_1} = \psi_1 \circ \varphi_{\alpha_3\alpha_4}$ is a Blaschke product of order $2$.

Suppose $\varphi_\lambda \circ \widetilde{\psi_1} = b z \varphi_\alpha$ for some $\lambda, \alpha \in \D$ and $|b| = 1$, then
\begin{align}\label{erodtbot}
\varphi_\lambda \circ \phi = \varphi_\lambda \circ \widetilde{\psi_1}(az\varphi_\gamma) = baz\varphi_\gamma \cdot \varphi_\alpha(az\varphi_\gamma) := c z\varphi_\gamma \varphi_{\beta_1} \varphi_{\beta_2},
\end{align}
where $\beta_1, \beta_2 \in \D, |c| = 1$. Since $\gamma \neq 0$, we have $\beta_1 \neq -\beta_2$. We mention here that if $\beta_1\beta_2 = 0$ or $\varphi_{\beta_1}(\gamma) = 0$ or $\varphi_{\beta_2}(\gamma) = 0$, then $\alpha = 0$.

Suppose $M_\phi$ is reducible on $D$ and let $\HM$ be a nontrivial reducing subspace of $M_\phi$ on $D$, then $(z\HM)'$ is a nontrivial reducing subspace of $M_\phi$ on $L^2_a$. Since $M_\phi$ has exact three nontrivial minimal reducing subspaces $M_0(\phi), M_0(\psi_2) \ominus M_0(\phi)$ and $M_0(\psi_2)^\perp$ on $L^2_a$ (\cite{SZZ10}), and $(z\HM)' \neq M_0(\phi)$, note that $[(z\HM)']^\perp = (z\HM^\perp)'$, we conclude that there are essentially two cases, either $(z\HM)' = M_0(\psi_2)$ or $(z\HM)' = M_0(\phi) \oplus M_0(\psi_2)^\perp$.

Let $\psi = \varphi_{\alpha_3\alpha_4}(\psi_2) = a z \varphi_\gamma$ and let $\varphi = \varphi_\lambda \circ \phi = (b z \varphi_\alpha)\circ (az\varphi_\gamma) = c z\varphi_\gamma \varphi_{\beta_1} \varphi_{\beta_2}$, then $M_0(\psi_2) = M_0(\psi)$ and $\varphi = c \overline{a} \psi \varphi_{\beta_1} \varphi_{\beta_2}$. If $(z\HM)' = M_0(\psi_2)$, then
$$\HM = \text{span} \{\frac{\psi^{j+1}}{z}: j \geq 0\}.$$

Note that $K_\gamma \in \HM^\perp$, we obtain that $\varphi K_\gamma \in \HM^\perp$, then
\begin{align*}
0 & = \langle \frac{\psi}{z}, \varphi K_\gamma\rangle_D = \langle \psi', \varphi K_\gamma\rangle_{H^2}\\
& = \langle z \psi',  zc \overline{a} \psi \varphi_{\beta_1} \varphi_{\beta_2} K_\gamma\rangle_{H^2}\\
& = \int_\T [1+P_\gamma(\zeta)] \overline{c \overline{a} \zeta \varphi_{\beta_1}(\zeta) \varphi_{\beta_2}(\zeta) K_\gamma(\zeta)} \frac{|d\zeta|}{2\pi}\\
& = \overline{c \overline{a} \gamma \varphi_{\beta_1}(\gamma) \varphi_{\beta_2}(\gamma) K_\gamma(\gamma)},
\end{align*}
therefore $\varphi_{\beta_1}(\gamma) \varphi_{\beta_2}(\gamma) = 0$, it follows that $\alpha = 0$. Hence $\varphi = \varphi_\lambda \circ \phi = (b z^2)\circ (az\varphi_\gamma) = b (az\varphi_\gamma)^2$ which contradicts the assumption that $\phi$ is not equivalent to $(z\varphi_\gamma)^2$ for any $\gamma \in \D \backslash \{0\}$. Thus $(z\HM)' \neq M_0(\psi_2)$.

If $(z\HM)' = M_0(\phi) \oplus M_0(\psi_2)^\perp$. Let $\HM_1 = \text{span}\{\frac{\varphi^{j+1}}{z}: j \geq 0\}$, then $\HM_1 \subseteq \HM$ and $(z\HM_1)' = M_0(\varphi) = M_0(\phi)$. By the calculation in Theorem \ref{dgridtvb}, we have
\begin{align}\label{ieunues}
M_\varphi^* \frac{\varphi}{z} = \overline{\gamma}K_\gamma + \overline{\beta_1}K_{\beta_1} + \overline{\beta_2}K_{\beta_2} \in \HM_1^\perp \cap \HM.
\end{align}
Let $\HM_2 \subseteq D$ be such that $(z\HM_2)' = M_0(\psi_2)^\perp$, then $\HM = \HM_1 \oplus \HM_2$. Note that $M_0(\psi_2)^\perp = \text{span}\{\frac{\psi^m}{1-\overline{\gamma}z}: m \geq 0\}$ (\cite{SZ03}), we have $K_\gamma \in \HM_2$, hence by (\ref{ieunues}), we obtain $\overline{\beta_1}K_{\beta_1} + \overline{\beta_2}K_{\beta_2} \in \HM_2$ and so $g: = \overline{\beta_1} \frac{1}{1 - \overline{\beta_1} z} + \overline{\beta_2} \frac{1}{1 - \overline{\beta_2} z} \in M_0(\psi_2)^\perp$. Then $g \psi \in M_0(\psi_2)^\perp$, it follows that
\begin{align*}
0 = \langle g \psi, \varphi'\rangle_{L^2_a} &= \langle g \psi, (c z\varphi_\gamma \varphi_{\beta_1} \varphi_{\beta_2})'\rangle_{L^2_a}\\
& = \langle a z \varphi_\gamma g, c\varphi_\gamma \varphi_{\beta_1} \varphi_{\beta_2}\rangle_{H^2}\\
& = \langle a z g, c \varphi_{\beta_1} \varphi_{\beta_2}\rangle_{H^2}\\
& = \langle a\overline{c} g,  \frac{\varphi_{\beta_1} \varphi_{\beta_2} - \varphi_{\beta_1}(0) \varphi_{\beta_2}(0)}{z}\rangle_{H^2}\\
& = a\overline{c} (-2\overline{\beta_1}\overline{\beta_2}),
\end{align*}
thus $\beta_1 \beta_2 = 0$. From (\ref{erodtbot}), we conclude that $\alpha = 0$. Therefore $\varphi = \varphi_\lambda \circ \phi = b (az\varphi_\gamma)^2$ which contradicts the assumption that $\phi$ is not equivalent to $(z\varphi_\gamma)^2$ for any $\gamma \in \D \backslash \{0\}$. Hence $(z\HM)' \neq M_0(\phi) \oplus M_0(\psi_2)^\perp$ and so $M_\phi$ is irreducible on $D$.
\end{proof}

If $\phi = \varphi_\lambda^2 (z\varphi_\gamma)$ for some $\lambda \in \D \backslash \{0\}, \gamma \in \D \backslash \{0\}$, then $\phi$ satisfies the assumption in the above lemma, thus $M_\phi$ is irreducible on $D$. We can also verify directly that $M_\phi$ is irreducible on $D$ in this case.

Now we can prove Theorem \ref{scnfrsd}:
\begin{proof}[Proof of Theorem \ref{scnfrsd}]
(i) If $\phi$ is equivalent to $z^4$, then by \cite{SZ02}, we see that the conclusion is true. (ii), (iii) and (iv) follow from Lemmas \ref{rdcfbofd}, \ref{nrfdczpess} and \ref{nrfdczes} respectively.

(v) If $\phi$ is not decomposable, then by \cite[Theorem 2.1]{SZZ10}, $M_\phi$ has exact two nontrivial minimal reducing subspaces on $L^2_a$, $M_0(\phi)$ and $M_0(\phi)^\perp$. And as noted before, we have $M_\phi$ is irreducible on $D$ in this case. The proof is complete.
\end{proof}

\section{The unitary equivalence relation}

\begin{thm}\label{petplznidd}
Given $\lambda \in \D \backslash \{0\}$, let $\phi = \varphi_\lambda(z^n)$ for some $n > 1$. Then $M_\phi$ is not unitarily equivalent to $M_{z^n}$ on $D$.
\end{thm}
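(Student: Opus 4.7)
The plan is to test a putative unitary intertwiner on the constant function $1$ and extract a contradiction from the fact that $\phi(0)=\lambda\neq 0$. Suppose $U\colon D\to D$ is unitary with $UM_\phi=M_{z^n}U$, and set $f=U(1)\in D$. Then for every $k\geq 0$,
$$U(\phi^k)=M_{z^n}^k U(1)=z^{nk}f,$$
so the isometry property of $U$ gives $\|\phi^k\|_D^2=\|z^{nk}f\|_D^2$ for every $k\geq 0$.

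The next step is to compute both sides in closed form. Since $\phi$ is inner, $\|\phi^k\|_{H^2}=1$; moreover, applying the identity for $D(\phi^k u,\phi^k v)$ recalled just before Theorem \ref{viorbdb} with $u=v=1$ yields $D(\phi^k)=nk$. Therefore
$$\|\phi^k\|_D^2=1+nk.$$
On the other hand, expanding $f=\sum_{m\geq 0}a_m z^m$ and using the orthogonality of the monomials $\{z^m\}$ in $D$,
$$\|z^{nk}f\|_D^2=\sum_{m\geq 0}(nk+m+1)|a_m|^2=nk\,\|f\|_{H^2}^2+\|f\|_D^2.$$
Equating these two expressions for every $k\geq 0$ forces $\|f\|_{H^2}^2=\|f\|_D^2=1$; hence $D(f)=0$, so $f$ reduces to a unimodular constant $c$.

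The contradiction is now immediate from preservation of the $D$-inner product under $U$:
$$\overline{\phi(0)}=\langle 1,\phi\rangle_D=\langle U(1),U(\phi)\rangle_D=\langle c,cz^n\rangle_D=|c|^2\langle 1,z^n\rangle_D=0,$$
which forces $\lambda=0$, contradicting the hypothesis $\lambda\in\D\setminus\{0\}$. The essential step is establishing the linear growth $\|\phi^k\|_D^2=1+nk$; once this is in hand, the matching of norms squeezes all the mass of $f$ into the constant term, and the nonvanishing of $\phi(0)$ closes the argument in one line. The hypothesis $\lambda\neq 0$ enters only at this very final comparison of inner products, which is exactly why the analogous statement fails, as it must, for $\phi=z^n$.
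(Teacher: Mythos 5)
Your proof is correct, and it takes a genuinely different and considerably more economical route than the paper. The paper's proof relies on the classification of the $n$ minimal reducing subspaces $M_j=\operatorname{span}\{z^j,z^{n+j},\dots\}$ of $M_{z^n}$ on $D$, places $f=U1$ in one of them, identifies $f$ up to a scalar from $M_\phi^*f=0$, and then runs $n$ separate Poisson-kernel integral computations, each ending in a numerical contradiction ($\alpha=0$ or $|\alpha|^{2n}=1$). You bypass all of that: the identity $D(\phi^k u,\phi^k v)=k\int_\T\sum_i P_{\lambda_i}u\bar v\,\frac{|d\zeta|}{2\pi}+D(u,v)$ with $u=v=1$ gives $\|\phi^k\|_D^2=1+nk$, while the monomial expansion gives $\|z^{nk}f\|_D^2=nk\|f\|_{H^2}^2+\|f\|_D^2$; matching the two affine functions of $k$ forces $\|f\|_{H^2}=\|f\|_D=1$, hence $D(f)=0$ and $f$ is a unimodular constant, and then $\overline{\phi(0)}=\langle 1,\phi\rangle_D=\langle U1,U\phi\rangle_D=|c|^2\langle 1,z^n\rangle_D=0$ contradicts $\phi(0)=\lambda\neq 0$. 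All steps check out (the one cosmetic point is that your intertwining convention $UM_\phi=M_{z^n}U$ is the adjoint of the paper's $U^*M_\phi U=M_{z^n}$, which is immaterial). What your argument buys, beyond brevity, is generality: nothing in it uses the special form $\varphi_\lambda(z^n)$, so it actually shows that for \emph{any} finite Blaschke product $\phi$ of order $n$ with $\phi(0)\neq 0$, $M_\phi$ is not unitarily equivalent to $M_{z^n}$ on $D$ --- a strictly stronger statement that would also streamline part of the reduction in the proof of Theorem \ref{fbueztcd}. What the paper's longer route buys is explicit information about where $U1$ could live among the reducing subspaces, which is in the spirit of the rest of Section 3 but is not needed for the conclusion.
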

\begin{proof}
Let $\omega = e^{\frac{2\pi i}{n}}$ be a primitive $n$-th root of unity and suppose $\alpha \in \D \backslash \{0\}$ satisfies $\alpha^n = \lambda$. Assume that $M_\phi$ is unitarily equivalent to $M_{z^n}$ on $D$. Note that $\phi$ is unitarily equivalent to $\varphi_\alpha \varphi_{\omega\alpha} \cdots \varphi_{\omega^{n-1}\alpha}$, thus $\varphi_\alpha \varphi_{\omega\alpha} \cdots \varphi_{\omega^{n-1}\alpha}$ is unitarily equivalent to $M_{z^n}$. Without loss of generality, suppose $\phi = \varphi_\alpha \varphi_{\omega\alpha} \cdots \varphi_{\omega^{n-1}\alpha}$ and there is a unitary operator $U$ on $D$ such that $U^* M_\phi U = M_{z^n}$.

Note that $M_\phi$ and $M_{z^n}$ has exact $n$ nontrivial minimal reducing subspaces on $D$,
$$M_j = \text{span} \{z^j, z^{n+j}, z^{2n+j}, \cdots\}, \quad 0 \leq j \leq n-1.$$
Let $f = U1$. Then $f$ is in one of the $M_j$'s. By $M_{z^n}^* 1 = 0$, we have $M_\phi^* f =0$, hence $f = c_1 K_\alpha + c_2 K_{\omega\alpha} + \cdots + c_n K_{\omega^{n-1}\alpha}, c_i \in \C$. Observe that $UM_{z^n} 1 = M_\phi U1 = \phi f, M_{z^n}^*(z^n) = n + 1$, therefore
$$(n+1) f = U M_{z^n}^*(z^n) = M_\phi^* U(z^n) = M_\phi^* (\phi f).$$

If $f \in \HM_0$, by $\dim \HM_j \ominus \phi \HM_j = 1, 0 \leq j \leq n-1$ and $\sum_{k=0}^\infty \frac{\overline{\alpha}^{nk}z^{nk}}{nk+1} \in \ker M_\phi^* \cap \HM_0$, we conclude that $f = c \sum_{k=0}^\infty \frac{\overline{\alpha}^{nk}z^{nk}}{nk+1}$ for some $c \in \C$. From
\begin{align*}
&\langle (n+1) f, 1\rangle_D = \langle M_\phi^* (\phi f), 1\rangle_D = \langle \phi f, \phi \rangle_D \\
&= \langle \phi, 1\rangle_D + \int_\T [p_\alpha(\zeta) + P_{\omega\alpha}(\zeta) + \cdots + P_{\omega^{n-1}\alpha}(\zeta)] f(\zeta) \frac{|d\zeta|}{2\pi},
\end{align*}
we obtain that
$$\langle n f, 1\rangle_D = \int_\T [p_\alpha(\zeta) + P_{\omega\alpha}(\zeta) + \cdots + P_{\omega^{n-1}\alpha}(\zeta)] f(\zeta) \frac{|d\zeta|}{2\pi},$$
i.e. $nf(0) = nf(\alpha)$, so $\alpha = 0$. This is a contradiction.

If $f \in \HM_1$, then by the same reasoning as above, $f = c \sum_{k=0}^\infty \frac{\overline{\alpha}^{nk+1}z^{nk+1}}{nk+2}$ for some $c \in \C$. From
\begin{align*}
&\langle (n+1) f, z\rangle_D = \langle M_\phi^* (\phi f), z\rangle_D = \langle \phi f, \phi z\rangle_D \\
&= \langle \phi, z\rangle_D + \int_\T [p_\alpha(\zeta) + P_{\omega\alpha}(\zeta) + \cdots + P_{\omega^{n-1}\alpha}(\zeta)] f(\zeta)\overline{\zeta} \frac{|d\zeta|}{2\pi},
\end{align*}
we get
$$\langle n f, z\rangle_D = \int_\T [p_\alpha(\zeta) + P_{\omega\alpha}(\zeta) + \cdots + P_{\omega^{n-1}\alpha}(\zeta)] f(\zeta)\overline{\zeta}  \frac{|d\zeta|}{2\pi},$$
i.e. $n \langle f, z\rangle_D = n \frac{f}{z}(\alpha)$, so
\begin{align}\label{scludis}
\sum_{k=0}^\infty \frac{|\alpha|^{2nk}}{nk+2} = 1.
\end{align}
Also
\begin{align*}
&\langle (n+1) f, z^{n+1}\rangle_D = \langle M_\phi^* (\phi f), z^{n+1}\rangle_D = \langle \phi f, \phi z^{n+1}\rangle_D \\
&= \langle \phi, z^{n+1}\rangle_D + \int_\T [p_\alpha(\zeta) + P_{\omega\alpha}(\zeta) + \cdots + P_{\omega^{n-1}\alpha}(\zeta)] f(\zeta)\overline{\zeta^{n+1}} \frac{|d\zeta|}{2\pi},
\end{align*}
then
$$n \langle f, z^{n+1}\rangle_D = \int_\T [p_\alpha(\zeta) + P_{\omega\alpha}(\zeta) + \cdots + P_{\omega^{n-1}\alpha}(\zeta)] f(\zeta)\overline{\zeta^{n+1}} \frac{|d\zeta|}{2\pi},$$
by a little calculation, we have $\frac{1}{2}|\alpha|^{2n}= \sum_{k=1}^\infty \frac{|\alpha|^{2nk}}{nk+2}$. By (\ref{scludis}), we conclude that $|\alpha|^{2n} = 1$. This is a contradiction.

If $f \in M_j, 2 \leq j \leq n-1$, similarly, we have $|\alpha|^{2n} = 1$ which is a contradiction.

From the above $n$ cases, we conclude that $M_\phi$ is not unitarily equivalent to $M_{z^n}$ on $D$.
\end{proof}

Now we can prove Theorem \ref{fbueztcd}.
\begin{proof}[Proof of Theorem \ref{fbueztcd}]
We only need to prove the "only if" part. Suppose $M_\phi$ is unitarily equivalent to $M_{z^n}$ on $D$. Since $M_{z^n}$ has exact $n$ nontrivial orthogonal minimal reducing subspaces on $D$ whose direct sum is $D$, we have $M_\phi$ has exact $n$ nontrivial orthogonal minimal reducing subspaces on $D$ whose direct sum is $D$, hence $M_\phi$ has $n$ nontrivial orthogonal reducing subspaces on $L^2_a$ whose direct sum is $L^2_a$. Since the order of $\phi$ is $n$, it follows that each of these nontrivial reducing subspace in $L^2_a$ is minimal. Thus by \cite[Theorem 2.4]{DPW12} or \cite[Lemma 4.2]{GH11} or \cite[Theorem 3.1]{SZZ08}, we have $\phi$ is equivalent to $\varphi_\alpha^n$ for some $\alpha\in \D$. It follows from Remark \ref{irdnznpc} that $\alpha = 0$. Theorefore by Theorem \ref{petplznidd}, we conclude that $\phi = az^n$ for some $|a| = 1$.
\end{proof}

\end{document}